\def\n{\nabla}
\def\M{\mathbb{M}}
\def\r{\mathbb{R}}
\def\d{\mathbb{D}}
\def\c{\mathbb{C}}
\def\R{\mathbb{R}}
\def\t{\tau}
\def\k{\kappa}
\def\h{\mathbb{H}}
\def\s{\mathbb{S}}
\def\mx{\mathfrak{X}}
\def\mt{\mathbf{T}}
\def\la{\langle}
\def\ra{\rangle}
\def\nt{\mathbf{t}}
\def\te{\theta}
\def\mer{\mathbb{M}^2 (\kappa)\times \mathbb{R}}
\def\mr{\mathbb{M}^2 \times \mathbb{R}}
\def\hr{\mathbb{H}^2\times \mathbb{R}}
\newcommand{\zb}{\bar{z}}
\newcommand{\hmf}{\mathbb{E}(\kappa , \tau)}
\newcommand{\norm}[1]{\left\vert #1 \right\vert}
\newcommand{\abs}[1]{\left\vert #1 \right\vert}
\newcommand{\set}[1]{\left\{#1\right\}}
\newcommand{\meta}[2]{\langle #1,#2 \rangle }
\theoremstyle{plain}
 \newtheorem{definition}{Definition}
 \newtheorem{theorem}{Theorem}
 \newtheorem{prop}{Proposition}
 \newtheorem{remark}{Remark}
 \newtheorem{lema}[prop]{Lema}
 \newtheorem{corollary}{Corollary}
\begin{document}
\title{The Abresch-Rosenberg Shape Operator\\and applications}
\author{José M. Espinar and Haimer A. Trejos}
\date{}
\maketitle

\vspace{.5cm}

\noindent $\mbox{}^\dag$ Instituto de Matematica Pura y Aplicada, 110 Estrada Dona
Castorina, Rio de Janeiro 22460-320, Brazil; e-mail: jespinar@impa.br

\noindent $\mbox{}^\ddag$ Instituto de Matematica Pura y Aplicada, 110 Estrada Dona
Castorina, Rio de Janeiro 22460-320, Brazil; e-mail: aletrejosserna@gmail.com

\vspace{.3cm}

\begin{abstract}
There exists a holomorphic quadratic differential defined on any $H-$ surface immersed in the homogeneous space $\hmf$ given by U. Abresch and H. Rosenberg \cite{Rosen,Rosen1}, called the Abresch-Rosenberg differential. However, there were no Codazzi pair on such $H-$surface associated to the  Abresch-Rosenberg differential when $\t \neq 0$. The goal of this paper is to find a geometric Codazzi pair defined on any $H-$surface in $\hmf$, when $\t \neq 0$, whose $(2,0)-$part is the Abresch-Rosenberg differential. 

In particular, this allows us to compute a Simons' type formula for $H-$surfaces in $\hmf$. We apply such Simons' type formula, first, to study the behavior of complete $H-$surfaces $\Sigma$ of finite Abresch-Rosenberg total curvature immersed in $\hmf$. Second, we estimate the first eigenvalue of any Schr\"{o}dinger operator $L= \Delta + V$, $V$ continuous, defined on such surfaces. Finally, together with the Omori-Yau's Maximum Principle, we classify complete $H-$surfaces in $\hmf$, $\t \neq 0$, satisfying a lower bound on $H$ depending on $\kappa$ and $\tau$.
\end{abstract}

{\bf 2010 MSC:} 53C42, 58J50.

{\bf Key words:} Constant Mean Curvature Surfaces, Homogeneous Space, Codazzi Pairs, Finite Total Curvature, Simons' Formula, Eigenvalue Estimate, Pinching Theorem.

\section{Introduction.}

W. P. Thurston proved that the building blocks of the Geometrization Conjecture are given by eight maximal model geometries, in other words, a maximal model geometry is a simply connected smooth manifold $\mathcal M$ together with a transitive action of the Lie group $G$ on $\mathcal M$, which is maximal among groups acting smoothly and transitively on $\mathcal M $, with compact stabilizers. Such maximal model geometries can be classified according to the dimension of its isometry group. If the dimension is 6, they correspond to the space forms $\mathbb{M}^{3}(\k)$. When the dimension is 3, the manifold has the geometry of the Lie group ${\rm Sol}_{3}$, and when the dimension is 4, they correspond to a 2-parameter family $\k,\t \in \r$, $\k -4 \t ^2 \neq 0$, of manifolds denoted by $\hmf$. These manifolds correspond to the product spaces $\mer$, when $\k \neq 0, \t=0 $, where $\mathbb{M}^{2}(\k)$ is the simply connected surface of constant curvature $\k$. The Heisenberg space ${\rm Nil}_{3}$, when $\k=0,\t \neq 0$. The covering space of the linear group ${\rm PSl}_{2}(\r)$, when $\k< 0, \t \neq 0$. The Berger sphere $\mathbb{S}^{3}_{B}(\k,\t)$, when $\k>0, \t \neq 0$. It is known that $\hmf$ is a Riemannian submersion over $\mathbb{M}^{2}(\k)$ with fiber bundle curvature $\t$ and the fibers are integral curves of a unit Killing field defined in $\hmf$.  \\

Constant mean curvature surfaces $\Sigma$, in short, $H-$surfaces, immersed in homogeneous 3-spaces $\hmf$ are receiving an impressive number of contributions since U. Abresch and H. Rosenberg \cite{Rosen,Rosen1} showed the existence of a holomorphic quadratic differential, the Abresch-Rosenberg differential, on such surfaces. Hence, they we able to extend the well-known Hopf's Theorem, i.e., they classified the topological spheres with constant mean curvature as the rotationally symmetric ones in $\hmf$. \\

When $\Sigma$ is isometrically immersed in a space form $\mathbb{M}^{3}(\kappa)$, the second fundamental form defines a bilinear symmetric tensor that satisfies the Codazzi equation. The Codazzi equation is fundamental to ensure that the usual Hopf differential is holomorphic on any $H-$surface in a space form; nevertheless, the above fact is no longer true when the surface is isometrically immersed in a homogeneous 3-manifold $\hmf$. However, J.A. Aledo, J.M. Espinar and J.A. G\'{a}lvez \cite{AEG3} obtained a geometric Codazzi pair $(I,II_{S})$ on any $H-$surface in $\mer$ so that the $(2,0)$-part of $II_{S}$ with respect to the conformal structure given by the first fundamental form $I$, is the Abresch-Rosenberg differential. However, up to now, despite the existence of a holomorphic quadratic differential on any $H-$surface in $\hmf$, $\tau \neq 0$, there were no natural (or geometric) Codazzi pair on such $H-$surfaces.  \\



In \cite{Simm}, J. Simons computed the Laplacian of the squared norm of the second fundamental form of a minimal submanifold in $\s ^n$. Nowadays, such formula is known as the {\bf Simons' formula} and it is a powerful tool to obtain classification theorems for minimal hypersurfaces. Later, Simons' formula has been extensively generalized by many authors in different situations (cf. \cite{Batista,Chern, CCK,FR}). In \cite{Yau}, S.Y. Cheng and  S.T. Yau computed an abstract version of the Simons' formula, that is, they computed the Laplacian of the squared norm of any bilinear symmetric Codazzi $(2,0)-$tensor defined on a Riemannian manifold $(\mathcal M^{n} ,g)$. Specifically, if $\Phi$ is a bilinear symmetric Codazzi $(2,0)-$tensor whose local expression in a orthonormal coframe $\{\omega_1 , \ldots , \omega_n \}$ is given by $\displaystyle{\Phi = \sum_{i,j} \phi_{ij} \, \omega_{i} \otimes \omega_{j}}$, then the Laplacian of $|\Phi|^{2}$ is given by 

\begin{equation}\label{Simmons}
\frac{1}{2} \Delta|\Phi|^{2} = \sum_{i,j,k} (\phi_{ij,k})^{2} + \sum_{i} \lambda_{i} ({\rm tr}(\Phi ))_{ii} + \frac{1}{2} \sum_{i,j} R_{ijij}(\lambda_{i} - \lambda_{j})^{2},
\end{equation}where $R_{ijij}$ is the Riemann curvature tensor associated to the metric $g$,  ${\rm tr} (\Phi )$ is the trace operator, $\lambda _i$, $i =1, \ldots , n$, are the eigenvalues of $\Phi$ for an orthonormal frame $\{e_1 , \ldots , e_n \}$ and $\Phi$ is a Codazzi tensor, that means, $\phi_{ij,k} = \phi_{ik,j}$.\\

A key ingredient to compute \eqref{Simmons} is that $\Phi$ is Codazzi. So, when the hypersurface is isometrically immersed in a space form $\mathbb{M}^{n}(\k)$, the second fundamental form  satisfies the Codazzi equation, hence we  can recover Simons' original formula. As said above, when $\t = 0$, the existence of a holomorphic quadratic differential  on a $H-$surface in $\mer$ implies the existence of a pair $(I,II_{S})$ defined on this $H-$surface, where $I$ is the induced metric of $\mer$ and $II_{S}$ is a symmetric Codazzi $(2,0)-$tensor with respect to the connection induced by $I$. These type of pairs $(I,II_{S})$ are known as Codazzi pairs (cf. \cite{AEG3,Mi}) and this property of $(I,II_{S})$ were used by M. Batista in \cite{Batista} to compute a Simons' type formula for $H-$surfaces in $\mer$.\\

One of the main points of this paper is to obtain a geometric Codazzi pair $(I,II_{AR})$, where $II_{AR}$ is a symmetric $(2,0)-$tensor, that we call the {\bf Abresch-Rosenberg fundamental form}, on any $H-$surface in $\hmf$ whose $(2,0)-$part with respect to the conformal structure induced by $I$ is the Abresch-Rosenberg differential. In other words, $(I,II_{AR})$ is a Codazzi pair on any $H-$surface. In particular, this allows us to compute a Simons' type formula for $H-$surfaces in $\hmf$. We apply such formula, first, to study the behavior of complete $H-$surfaces $\Sigma$ of finite Abresch-Rosenberg total curvature immersed in $\hmf$, i.e., those $H-$surface whose $L^2 -$norm of the traceless Abresch-Rosenberg fundamental form is finite. Observe that complete $H-$surfaces $\Sigma \subset \r ^{3}$ of finite total curvature, that is, those whose $L^{2}-$norm of its traceless second fundamental form is finite, are of capital importance on the comprehension of $H-$surfaces in $\r ^3$. In the case $H=0$, Osserman's Theorem gives an impressive description of them. If $\Sigma $ has constant nonzero mean curvature and finite total curvature, then it must be compact. In our case, we extend the latter result when $H$ is greater than a constant depending only on $\kappa$ and $\tau$. We also estimate the first eigenvalue of any Schr\"{o}dinger operator $L= \Delta + V$, $V$ continuous, defined on $H-$surfaces of finite Abresch-Rosenberg total curvature. Finally, together with the Omori-Yau's Maximum Principle, we classify complete $H-$surfaces (not necessary of finite Abresch-Rosenberg total curvature) in $\hmf$, $\t \neq 0$.\\

\subsection{Outline of the paper}

In Section 2, we set up the notation and we review some of the standard facts on Codazzi pairs.\\ 
 
Section 3 is devoted to the Codazzi pair interpretation of the Abresch-Rosenberg differential and its geometric properties. First, we discuss the known case of $H-$surfaces in a product space $\mathbb{M}^{2}(\kappa) \times \mathbb{R}$. Later, we obtain a geometric Codazzi pair associated to the Abresch-Rosenberg differential on any $H-$surface immersed in $\hmf$ when $\t \neq 0$. Specifically, Lemma \ref{Lem:Key} says
\begin{quote}
{\bf Key Lemma.}  {\it Given a $H-$surface $\Sigma \subset \hmf$, $\tau \neq 0$, consider the symmetric $(2,0)-$tensor given by 
$$II_{AR}(X,Y) = II(X,Y) - \alpha \la \mt_{\te},X \ra \la \mt_{\te},Y \ra + \frac{\alpha \norm{\mt}^{2}}{2}\la X,Y \ra, $$where 
\begin{itemize}
\item $\alpha = \dfrac{\kappa - 4\t^{2}}{2 \sqrt{H^{2} + \t^{2}}}$,
\item $e^{2i\te} = \dfrac{H - i\t}{\sqrt{H^{2} + \t^{2}}}$ and 
\item $\mt_{\te} = \cos\te \, \mt + \sin\te \, J\mt$.
\end{itemize}

Then, $(I, II_{AR})$ is a Codazzi pair with constant mean curvature $H$. Moreover, the $(2,0)-$part of $II_{AR}$ with respect to the conformal structure given by $I$ agrees (up to a constant) with the Abresch-Rosenberg differential.}
\end{quote}

In Section 4, we obtain a Simons' type formula on any $H-$surface in $\hmf$, $\t \neq 0$ (cf. \cite{Batista} when $\t =0$). To do so, we use the Codazzi pair defined on the Key Lemma and \cite{Yau}, 

\begin{quote}
{\bf Theorem \ref{Simfor}.} {\it Let $\Sigma$ be a $H-$surface in $\hmf$. Then, the traceless Abresch-Rosenberg shape operator satisfies
\begin{equation*}
\frac{1}{2} \Delta |S|^{2} =|\n S|^{2}  + 2 K |S|^{2},
\end{equation*}or, equivalently, away from the zeroes of $\abs{S}$,  
\begin{equation*}
\abs{S} \, \Delta \abs{S} - 2 K \abs{S}^2 =  \abs{\nabla \abs{S}}^{2}.
\end{equation*}}
\end{quote}

Section 5 is devoted to complete $H-$surfaces in $\hmf$ of finite Abresch-Rosenberg total curvature, i.e., those $H-$surfaces that satisfy
$$ \int _\Sigma  \abs{S} ^2 < +\infty .$$

We must point out here that the family of complete constant mean curvature surfaces of finite Abresch-Rosenberg total curvature is large. We focus on $H=1/2$ surfaces in $\h ^2 \times \r $ to show this fact. Recall the following result of Fern\'{a}ndez-Mira:

\begin{quote}
{\bf Theorem \cite[Theorem 16]{FM}.} {\it Any holomorphic quadratic differential on an open simply connected Riemann surface is the Abresch-Rosenberg differential of some complete surface $\Sigma$ with $H=1/2$ in $\h ^2 \times \r$. Moreover, the space of noncongruent complete mean curvature one half surfaces in $\h^2 \times \r$ with the same Abresch-Rosenberg differential is generically infinite.}
\end{quote}

We will see that, if we take the disk $\d $ as our open Riemann surface and a holomorphic quadratic differential on $\d$ that extends continuously to the boundary, then the $H=1/2$ surface $\Sigma$ constructed in \cite[Theorem 16]{FM} has finite Abresch-Rosenberg total curvature.

The above examples also show that, despite what happens in $\r ^3$, $H-$surfaces $\Sigma \subset \hmf$ of finite Abresch-Rosenberg total curvature are not necessarily conformally equivalent to a compact surface minus a finite number of points, in particular, $\Sigma$ is not necessarily parabolic. However, we can obtain (cf. \cite{BCF} when $\t =0$):

\begin{quote}
{\bf Theorem \ref{ThConfEquiv}.} {\it Let $\Sigma$ be a complete surface in $\hmf$, $H^{2}+\t^{2} \neq 0$, of finite Abresch-Rosenberg total curvature. Suppose one of the following conditions holds 
\begin{itemize}
\item[1.] $\kappa-4\t^{2}>0$ and $H^{2}+\t^{2} > \frac{\kappa-4\t^{2}}{4} $.
\item[2.] $\kappa-4\t^{2}<0$ and $H^{2}+\t^{2} > -\frac{(\sqrt{5}+2)}{4}(\kappa-4\t^{2}) $.
\end{itemize}

Then, $\Sigma$ is compact.}
\end{quote}

Also, we extend Simons' first stability eigenvalue estimate (cf. \cite{Simm})  to Schr\"{o}dinger operators $L= \Delta + V$ defined on a complete $H-$surface of finite Abresch-Rosenberg total curvature, $H^{2}+\t^{2} \neq 0$, immersed in $\hmf$,

\begin{quote}
{\bf Theorem \ref{Thm:Estimate1}.} {\it Let $\Sigma$ be a complete two-sided $H-$surface in $\hmf$ of  finite Abresch-Rosenberg total curvature and $H^{2} + \t^{2} \neq 0$. Denote by $\lambda_{1}(L)$ the first eigenvalue associated to the Schr\"{o}dinger operator $L := \Delta + V$, $V\in C^0 (\Sigma)$. Then, $\Sigma$ is either an Abresch-Rosenberg surface, a Hopf cylinder or 
\begin{equation*}
\lambda _{1}(L) < - {\rm inf}_{\Sigma}\set{V+2K}.
\end{equation*}}
\end{quote}

In particular, when $L$ is the Stability (or Jacobi) operator, i.e., 
$$ L = \Delta + (\abs{A}^{2} + {\rm Ric} (N)) ,$$where ${\rm Ric}(N)$ is the Ricci curvature of the ambient manifold in the normal direction, we obtain the following (cf. \cite{AMO} when $\Sigma$ is closed):

\begin{quote}
{\bf Theorem \ref{Stability}.}{ \it Let $\Sigma$ be a complete  two sided $H-$surface of finite Abresch-Rosenberg total curvature  in $\hmf$, $H^{2}+\t^{2} \neq 0$. 
\begin{itemize}
\item If $\kappa - 4\t ^{2} > 0$. Then, $\Sigma $ is either an Abresch-Rosenberg $H-$surface, a Hopf cylinder, or
\begin{equation*}
\lambda_{1} < - (4H^{2} + \kappa ).
\end{equation*}
\item If $\k-4\t^{2} < 0$. Then, $\Sigma$ is either an Abresch-Rosenberg $H-$surface, or 
\begin{equation*} 
\lambda_{1} <  - (4H^{2} + \kappa ) - (\kappa-4\t^{2}).
\end{equation*}
\end{itemize}}
\end{quote}

Finally, in Section 6, we apply the Simons' type formula to classify complete $H-$surfaces in $\hmf$ under natural geometric conditions using the Omori-Yau's Maximum Principle. We can summarize Theorem \ref{ap1} and Theorem \ref{ap2} as follows (cf. \cite{Batista} when $\t =0$):

\begin{quote}
{\bf Theorems \ref{ap1} and \ref{ap2}.} {\it Let $\Sigma$  be a complete immersed $H-$surface in $\hmf$, $H^2+\tau^2 \neq 0$. 
\begin{itemize}
\item If  $\k-4\t^{2}>0$, assume that $4(H^{2}+\t^{2}) > \k-4\t^{2}$  and 
$$\sup_{\Sigma} |S| <  \sqrt{2}\sqrt{(H^{2}+\t^{2})+(\kappa -4\t^{2})},$$
where $S$ is the traceless Abresch-Rosenberg shape operator. Then, $\Sigma$ is an Abresch-Rosenberg surface in  $\hmf$.  Moreover, if 
$$\sup_{\Sigma} |S| = \sqrt{2}\sqrt{(H^{2}+\t^{2})+(\kappa -4\t^{2})}$$and there exists one point $p \in \Sigma$ such that  $\abs{S(p)}=\sup_{\Sigma} |S|$, then $\Sigma$ is a Hopf cylinder. 

\item If $\kappa - 4\t^{2} < 0$, assume that $(H^{2}+\t^{2}) > \abs{\k-4\t^{2}}$ and 
$$\sup_{\Sigma} |S| <  -\abs{\alpha} +\sqrt{2(H^{2}+\t^{2}) +\frac{\alpha^{2}}{2}},$$ where $S$ is the traceless Abresch-Rosenberg shape operator. Then, $\Sigma$ is  an Abresch-Rosenberg surface of $\hmf$.

Moreover, if 
$$\sup_{\Sigma} |S| =   -\abs{\alpha} +\sqrt{2(H^{2}+\t^{2}) +\frac{\alpha^{2}}{2}}$$and there exists one point $p \in \Sigma$ such that  $\abs{S(p)}=\sup_{\Sigma} |S|$, then $\Sigma$ is a Hopf cylinder.
\end{itemize}}
\end{quote}

\section{Preliminaries}

Here, we mainly follow \cite{AEG3,Batista,D,ER,ER2,Mi,Scott}. 

\subsection{Homogeneous Riemanniann Manifolds $\mathbb{E}(\kappa ,\tau)$.}

The simply connected homogeneous manifold $\hmf $ is a Riemannian submersion $\pi: \hmf \rightarrow \M^{2}(\kappa )$ over a simply connected surface of constant curvature $\kappa$. The \textbf{fibers}, i.e. the inverse image of a point at $\M^{2}(\kappa)$ by $\pi$, are the trajectories of a unitary Killing field $\xi$, called the \textbf{vertical vector field}.

Denote by $\overline{\nabla}$ the Levi-Civita connection of $\hmf$, then for all $X \in \mx (\hmf )$, the following equation holds (see \cite{Scott}):
$$\overline{\nabla}_{X}\xi = \t  X \wedge \xi ,$$where $\t$ is the \textbf{bundle curvature}. Note that $\t = 0$ implies that $\hmf$ is a product space. Denote by $\overline{R}$ the Riemann curvature tensor of $\hmf$, then, 

\begin{lema}[\cite{D}]
Let $\hmf$ be a homogeneous space with unit Killing field $\xi$. For all vector fields $X,Y,Z,W \in \mx(\hmf)$, we have:

\begin{equation}\label{Cur}
\begin{split}
\langle \overline{R}(X,Y)Z,W \rangle &= (\kappa  - 3\t^{2})\left( \langle X,Z \rangle Y - \langle Y,Z \rangle X \right) \\
    & \quad + (\kappa  - 4\t^{2})\left( \langle \xi, Y \rangle \langle \xi, Z \rangle X - \langle \xi, X \rangle \langle \xi, Z \rangle Y \right) \\
 & \quad - (\kappa  - 4\t^{2})\left(\langle Z,Y \rangle \langle \xi, X \rangle \xi + \langle Z, X \rangle \langle \xi, Y \rangle \xi \right).
\end{split}
\end{equation}
\end{lema}

\subsection{Immersed surfaces in $\hmf$.}

Let $\Sigma \subset \hmf$ be an oriented immersed connected surface. We endow $\Sigma$ with the induced metric of $\hmf$, called the \textbf{first fundamental form}, which we still denote by $\meta{}{}$. Denote by $\nabla$ and $R$ the Levi-Civita connection and the Riemann curvature tensor of $\Sigma$ respectively. Also, denote by $A$ the {\bf shape operator}, $$AX = -\overline{\nabla}_{X}N  \text{ for all }X \in \mx(\Sigma), $$where $N$ is the unit normal vector field along the surface $\Sigma$. Then $II(X,Y) = \langle AX,Y \rangle$ is the {\bf second fundamental form} of $\Sigma$. 

Moreover, denote by $J$ the \textbf{oriented rotation of angle $\frac{\pi}{2}$} on $T\Sigma$, 
$$JX = N \wedge X \text{ for all } X \in \mx(\Sigma) .$$ 

Set $\nu = \langle N,\xi\rangle$ and $\mt = \xi - \nu N$, then, $\nu$ is the normal component of the vertical vector field $\xi$, called the \textbf{angle function}, and $\mt$ is a vector field in $\mathfrak{X}(\Sigma)$ called the \textbf{tangent component of the vertical vector field} $\xi$.

Note that, in a product space $\M^{2}(\kappa) \times \R$, we have a natural projection onto the fiber $\sigma:\Sigma \rightarrow \R$, hence we can define the restriction of $\sigma$ to the surface $\Sigma$, that is, $h:\Sigma \rightarrow \R$, $h = \sigma|_{\Sigma}$. The function $h$ is called the \textbf{height function} of $\Sigma$. So, in $\M^{2}(\kappa) \times \R$, one can easily observes that $\overline{\nabla}\sigma  = \xi$ and hence, $\mt$ is the projection of $\overline{\nabla}\sigma$ onto the tangent plane, $\nabla h = \mt$.

\begin{lema}[\cite{D}]
Let $\Sigma \subset \hmf$ be an immersed surface with unit normal vector field $N$ and shape operator $A$. Let $\mt$ and $\nu$ be the tangent component of the vertical vector field and the angle function respectively. Then, given $X,Y \in \mx(\Sigma)$, the following equations hold:

\begin{eqnarray}
K & = & K_{e} + \t^{2} + (\kappa - 4\t^{2})\nu^{2} ,\label{Gauss}\\
T_{S}(X,Y) & = & (\kappa - 4\t^{2})\nu(\la Y,\mt \ra X - \la X, \mt \ra Y ), \label{Codazzi}\\
\nabla_{X}\mt & = & \nu(AX - \t JX) ,\label{NablaT}\\
d\nu(X) & = & \la \t JX - AX, \mt \ra ,\label{DerNu}\\
\|\mt \|^{2} & + & \nu^{2} =  1 \label{Norm} ,
\end{eqnarray}where $K$ denotes the Gaussian curvature of $\Sigma$, $K_{e}$ denotes the extrinsic curvature and $T_{S}$ is the tensor given by:
$$T_{S}(X,Y) = \nabla_{X}AY - \nabla_{Y}AX - A([X,Y]), \,\, X,Y \in \mx(\Sigma) .$$
\end{lema}

\subsection{$H-$Surfaces in $\hmf$ and the Abresch-Rosenberg differential.}

Let $\Sigma$ be an orientable complete connected $H-$surface immersed in $\hmf$. In terms of a local conformal parameter $z$, the first fundamental form $I = \la,\ra$ and the second fundamental form are given by 

\begin{eqnarray*}
I & = & 2\lambda |dz|^{2}, \\
II & = & Q dz^{2} + 2 \lambda H |dz|^{2} + \overline{Q}d\overline{z}^{2},
\end{eqnarray*}where $Q dz^{2} = - \la \overline{\nabla}_{\partial_{z}}N,\partial_{z} \ra dz^2$ is the usual \textbf{Hopf differential} of $\Sigma$. Hence, in this conformal coordinate, the above Lemma reads as:

\begin{lema}[\cite{ER,ER2,FM}]
\label{EQCP}
Given an immersed $H-$surface $\Sigma \subset \hmf$, the following equations are satisfied: 
\begin{eqnarray}
K & = & K_{e} + \t^{2} + (\kappa - 4\t^{2})\nu^{2} \label{GaussZ}\\
Q_{\overline{z}} & = & \lambda(\kappa - 4\t^{2})\nu \nt \label{CodazziZ}\\
\nt_{z} & = & \frac{\lambda_{z}}{\lambda}\nt + Q\nu \label{NablaTZ}\\
\nt_{\overline{z}} & = & \lambda(H + i \t) \nu \label{NablaTZb}\\
\nu_{z} & = & -(H - i\t)\nt -\frac{Q}{\lambda}\overline{\nt}  \label{DerNuZ} \\
|\nt |^{2} & = & \frac{1}{2} \lambda(1-\nu^{2}) \label{normZ},
\end{eqnarray}where $\nt = \la \mt, \partial_{z} \ra$, $\overline{\nt} = \la \mt, \partial_{\overline{z}} \ra$, $K_{e}$ is the extrinsic curvature and $K$ is the Gaussian curvature.
\end{lema}

For an immersed $H-$surface $\Sigma \subset \hmf$, there is a globally defined quadratic differential, called the {\bf Abresch-Rosenberg differential}.

\begin{definition}[\cite{Rosen,Rosen1}]\label{DefAR}
Given a local conformal parameter $z$ for $I$, the \textbf{Abresch-Rosenberg differential} is defined by:
$$\mathcal{Q}^{AR} = Q^{AR}dz^{2} = (2(H+i \t)Q - (\k - 4\t^{2})\nt^{2})dz^{2},$$moreover, associated to the Abresch-Rosenberg differential we define the \textbf{Abresch-Rosenberg map} $q^{AR}: \Sigma \rightarrow [0,+\infty)$ by:
$$q^{AR} = \frac{ |Q^{AR}|^{2} }{4\lambda^{2}}.$$

Note that $\mathcal{Q}^{AR}$ and $q^{AR}$ do not depend on the conformal parameter $z$, hence $\mathcal{Q}^{AR}$ and $q^{AR}$ are globally defined on $\Sigma$.
\end{definition}

Then using Lemma \ref{EQCP}, we can show

\begin{theorem}[\cite{Rosen,Rosen1}]
\label{thQH}
Let $\Sigma$ be a $H$- surface in $\hmf$, then the Abresch-Rosenberg differential $\mathcal{Q}^{AR}$ is holomorphic for the conformal structure induced by the first fundamental form $I$.
\end{theorem}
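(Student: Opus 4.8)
The plan is to verify directly that $Q^{AR}$ satisfies the Cauchy--Riemann condition $(Q^{AR})_{\overline{z}}=0$ in a local conformal parameter $z$ for $I$. Since Definition \ref{DefAR} already records that $\mathcal{Q}^{AR}=Q^{AR}\,dz^{2}$ is a globally well-defined quadratic differential (independent of the choice of $z$), it suffices to work in a single conformal chart; holomorphicity of the differential is then equivalent to the vanishing of $\partial_{\overline{z}}Q^{AR}$ there.

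First I would differentiate $Q^{AR} = 2(H+i\t)Q - (\kappa-4\t^{2})\nt^{2}$ with respect to $\overline{z}$. Since $H$, $\t$ and $\kappa$ are constants, they commute with $\partial_{\overline{z}}$, and the product rule gives
\begin{equation*}
(Q^{AR})_{\overline{z}} = 2(H+i\t)\,Q_{\overline{z}} - 2(\kappa-4\t^{2})\,\nt\,\nt_{\overline{z}}.
\end{equation*}

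Next I would substitute the two structural equations from Lemma \ref{EQCP}: the Codazzi equation \eqref{CodazziZ}, namely $Q_{\overline{z}}=\lambda(\kappa-4\t^{2})\nu\,\nt$, together with equation \eqref{NablaTZb}, namely $\nt_{\overline{z}}=\lambda(H+i\t)\nu$. Plugging these in produces two terms, each equal to $2(H+i\t)(\kappa-4\t^{2})\lambda\nu\,\nt$, which cancel exactly, so that $(Q^{AR})_{\overline{z}}=0$ and $\mathcal{Q}^{AR}$ is holomorphic.

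The computation is short and essentially mechanical; the only point requiring care is the bookkeeping---keeping $H$ and $\t$ constant under $\partial_{\overline{z}}$ and matching the two surviving terms so that the cancellation is precise. There is no genuine obstacle: the content of the theorem is entirely encoded in the compatibility equations \eqref{CodazziZ} and \eqref{NablaTZb}, and the specific coefficients $2(H+i\t)$ and $(\kappa-4\t^{2})$ appearing in the definition of $Q^{AR}$ are chosen exactly so as to force this cancellation. This is precisely why, when $\t\neq 0$, the Hopf differential $Q\,dz^{2}$ alone fails to be holomorphic (equation \eqref{CodazziZ} has a nonzero right-hand side), whereas the Abresch--Rosenberg correction $-(\kappa-4\t^{2})\nt^{2}$ restores it.
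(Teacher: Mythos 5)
Your proof is correct and follows exactly the route the paper indicates: the paper states that Theorem \ref{thQH} follows from Lemma \ref{EQCP}, and your computation---differentiating $Q^{AR}=2(H+i\t)Q-(\kappa-4\t^{2})\nt^{2}$ in $\overline{z}$ (using that $H$ is constant) and cancelling the two resulting terms via \eqref{CodazziZ} and \eqref{NablaTZb}---is precisely that argument. One tangential quibble: your closing remark attributes the failure of holomorphicity of the Hopf differential to $\t\neq 0$, but the right-hand side of \eqref{CodazziZ} is nonzero whenever $\kappa-4\t^{2}\neq 0$, so the same failure (and the same need for the correction term) occurs in the product spaces $\t=0$, $\kappa\neq 0$.
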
 

\subsection{Codazzi Pairs on Surfaces}

We shall denote by $\Sigma$ an orientable (and oriented) smooth surface. 

\begin{definition}
A fundamental pair on $\Sigma$ is a pair of real quadratic forms
$(I,II)$ on $\Sigma$, where $I$ is a Riemannian metric.
\end{definition}

Associated with a fundamental pair $(I,II)$ we define the shape operator $S$ of the pair as:
\begin{equation}\label{ii}
 II(X,Y)=I(S(X),Y) \text{ for any } X,Y \in \mx (\Sigma).
\end{equation}

Conversely, it is clear from (\ref{ii})  that the quadratic form $II$ is totally
determined by $I$ and $S$. In other words, a fundamental pair on $\Sigma$ is equivalent to a Riemannian metric on $\Sigma$ together with a self-adjoint endomorphism $S$.

We define the {\bf mean curvature}, the {\bf extrinsic curvature} and the {\bf principal curvatures} of the fundamental pair $(I,II)$ as one half of the trace, the determinant and the eigenvalues of the endomorphism $S$, respectively.

In particular, given local parameters $(x,y)$ on $\Sigma$ such that
$$
I=E\,dx^2+2F\,dxdy+G\,dy^2,\qquad II=e\,dx^2+2f\,dxdy+g\,dy^2,
$$
the mean curvature and the extrinsic curvature of the pair are
given, respectively, by
$$
H(I,II)=\frac{E g+G e-2F f}{2(EG-F^2)},\qquad K_e(I,II)=\frac{eg-f^2}{EG-F^2},
$$moreover, the principal curvatures of the pair are $H(I,II)\pm\sqrt{H(I,II)^2-K_e (I,II)}$.

We shall say that the fundamental pair $(I,II)$ is {\bf umbilical} at $p\in \Sigma$, if $II$ is proportional to $I$ at $p$, or equivalently:
\begin{itemize}
\item if both principal curvatures coincide at $p$, or
\item if $S$ is proportional to the identity map on the tangent plane at $p$, or
\item if $H(I,II)^2-K_e (I,II)=0$ at $p$.
\end{itemize}

We define the {\bf Hopf differential} of the fundamental pair $(I,II)$ as the
(2,0)-part of $II$ for the Riemannian metric $I$. In other words, if we consider $\Sigma$ as a Riemann surface with respect to the metric $I$ and take a local conformal parameter $z$, then we can write
\begin{equation}\label{parholomorfo}
\begin{array}{c} 
I=2\lambda\,|dz|^2, \\[2mm]
II=Q\,dz^2+2\lambda\,H\,|dz|^2+\overline{Q}\,d\zb^2.
\end{array}
\end{equation}

The quadratic form $Q\,dz^2$, which does not depend on the chosen parameter $z$, is known as the {\it Hopf differential} of the pair $(I,II)$. We note that $(I,II)$ is umbilical at $p\in\Sigma$ if and only if $Q(p)=0$.

\begin{remark}
All the above definitions can be understood as natural extensions of the
corresponding ones for isometric immersions of a Riemannian surface in a $3-$dimensional ambient space, where $I$ plays the role of the induced metric and $II$ the role of its second fundamental form.
\end{remark}

A specially interesting case happens when the fundamental pair satisfies the Codazzi equation, that is,

\begin{definition}
We say that a fundamental pair $(I,II)$, with shape operator $S$, is a
{\bf Codazzi pair} if
\begin{equation}\label{ecuacioncodazzi}
\nabla_XSY-\nabla_YSX-S[X,Y]=0,\qquad X,Y\in\mx(\Sigma),
\end{equation}
where $\nabla$ stands for the Levi-Civita connection associated to the Riemannian metric $I$ and $\mx (\Sigma)$ is the set of smooth vector fields on $\Sigma$.
\end{definition}

Let us also observe that, from (\ref{parholomorfo}) and (\ref{ecuacioncodazzi}), a fundamental pair $(I,II)$ is a Codazzi pair if and only if
$$
Q_{\zb}=\lambda\,H_z.
$$

Thus, one has:
\begin{lema}[\cite{Mi}]\label{l0.1}
Let $(I,II)$ be a fundamental pair. Then, any two of the conditions {\rm (i)}, {\rm (ii)}, {\rm (iii)} imply the third:
\begin{itemize}
\item[{\rm (i)}] $(I,II)$ is a Codazzi pair.
\item[{\rm (ii)}] $H$ is constant.
\item[{\rm (iii)}] The Hopf differential is holomorphic.
\end{itemize}
\end{lema}

\begin{remark}
We observe that Hopf's Theorem \cite[p. 138]{Ho}, on the uniqueness of round spheres among immersed constant mean curvature spheres in Euclidean 3-space, can be easily obtained from this result.
\end{remark}

\section{Abresch-Rosenberg Differential and Codazzi Pairs}

One of the main points in the work of Abresch-Rosenberg \cite{Rosen,Rosen1} is to prove that the quadratic differential $\mathcal{Q}^{AR}$ given in Definition \ref{DefAR} is holomorphic on any $H-$surface $\Sigma$ immersed in  $\hmf$. Hence, it is easy to see that such quadratic differential must vanish on a topological sphere by the Poincar\'{e}-Hopf Index Theorem. Then, the authors showed that if the Abresch-Rosenberg differential vanishes on a $H-$surface then it must be invariant under certain one parameter subgroup of isometries of the ambient manifold $\hmf$. In particular, when the surface is a topological sphere, it must be rotationally symmetric. Specifically, they proved:

\begin{theorem}[\cite{Rosen,Rosen1}]
The only topological spheres in $\hmf$ with constant mean curvature $H$ are the rotational invariant spheres.
\end{theorem}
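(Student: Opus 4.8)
The plan is to combine the holomorphicity of the Abresch-Rosenberg differential (Theorem \ref{thQH}) with a topological obstruction on the sphere, and then to classify the $H$-surfaces on which $\mathcal{Q}^{AR}$ vanishes identically. First I would recall that, by Theorem \ref{thQH}, $\mathcal{Q}^{AR} = Q^{AR}\,dz^2$ is a holomorphic quadratic differential for the conformal structure induced by the first fundamental form $I$. Endowing a topological sphere $\Sigma$ with this conformal structure and invoking the uniformization theorem, $(\Sigma, I)$ is conformally equivalent to the Riemann sphere $\s^2$.

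Next I would show that a holomorphic quadratic differential on $\s^2$ must vanish identically. The quickest route is the Poincar\'e--Hopf Index Theorem applied to the line field defined by $\mathcal{Q}^{AR}$: away from its zeroes, $\mathcal{Q}^{AR}$ determines a pair of orthogonal direction fields on $\Sigma$ (the directions along which $Q^{AR}\,dz^2$ is real and of a fixed sign), and at a zero of order $k$ this line field has index $-k/2 \le 0$. Since the sum of the indices of a line field on a closed surface equals its Euler characteristic, and $\chi(\s^2) = 2 > 0$, the index argument shows the differential can have no zeroes at all; but a nonzero holomorphic quadratic differential on a closed genus-zero surface is forced to have zeroes (its zero divisor has degree $4g-4 = -4$). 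These two facts are incompatible, hence $\mathcal{Q}^{AR} \equiv 0$ on $\Sigma$.

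It then remains to identify the $H$-surfaces with $\mathcal{Q}^{AR} \equiv 0$. Writing out $Q^{AR} = 2(H+i\t)Q - (\k - 4\t^2)\nt^2 = 0$ couples the Hopf differential $Q$ to the tangent part $\nt$ of the vertical field; substituting this relation into the structure equations of Lemma \ref{EQCP} (in particular \eqref{CodazziZ}--\eqref{normZ}) reduces the geometry of $\Sigma$ to a system of ODEs along the trajectories of $\mt$. I would argue that these equations force $\Sigma$ to be invariant under the one-parameter subgroup of ambient isometries generated by the rotations about a vertical fiber, so that $\Sigma$ is a rotationally invariant sphere; conversely, the rotationally invariant $H$-spheres are the known complete examples, which closes the classification.

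The main obstacle is precisely this last step: passing from the pointwise algebraic identity $\mathcal{Q}^{AR} = 0$ to the global statement of rotational symmetry. This is where the original argument of Abresch--Rosenberg is delicate, since one must integrate the structure equations and match the resulting profile against the explicit rotationally invariant examples, and it is the part that genuinely uses the ambient homogeneous structure of $\hmf$ rather than just the holomorphicity of $\mathcal{Q}^{AR}$.
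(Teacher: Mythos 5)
Your proposal follows essentially the same route the paper takes: holomorphicity of $\mathcal{Q}^{AR}$ (Theorem \ref{thQH}), vanishing of the differential on a topological sphere via the Poincar\'e--Hopf index theorem, and then the classification of $H$-surfaces with $\mathcal{Q}^{AR}\equiv 0$ as invariant under a one-parameter group of isometries (the paper's Lemma \ref{QVa}, cited from \cite{Rosen,Rosen1,ER2}), which for spheres yields rotational symmetry. The paper likewise delegates that final classification step to the original references rather than proving it, so your flagging of it as the delicate, unproved-here part is consistent with the paper's treatment.
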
  

Lemma \ref{l0.1} tells us that the existence of a holomorphic quadratic differential should imply the existence of a Codazzi pair on any $H-$surface in $\hmf$. When $\hmf$ is a product manifold, i.e. $\t =0$, such Codazzi pair was found a long time ago (cf. \cite{AEG3}). Our goal here is to obtain a Codazzi pair on any $H-$surface such that the Abresch-Rosenberg differential appears as its Hopf differential. First, we recover the case $\t =0$ since it will enlighten the case $\t \neq 0$. 

\subsection{$H-$surfaces in $\hmf$ with $\t =0$.}

Consider a complete immersed $H-$surface $\Sigma \subset \mathbb{M}^{2}(\kappa) \times \mathbb{R}$. According to the notation introduced above, we define the self-adjoint endomorphism $S$ on $\Sigma$ as
\begin{equation}
SX = 2H \, AX - \kappa \langle X,\mt \rangle \mt + \frac{\kappa}{2}\norm{\mt}^{2}X - 2H^{2}X,
\label{Oper}
\end{equation}where $X \in \mathfrak{X}(\Sigma)$, $A$ is the Weingarten operator associated to the second fundamental form, $\mt$ is the tangential component of the vertical vector field $\partial_{t}$ defined in $\mathbb{M}^{2}(\kappa) \times \mathbb{R}$. 

Consider the quadratic form $II_S$ associated to $S$  given by \eqref{Oper}. In \cite{AEG3}, it was shown that $(I,II_S)$ is a Codazzi pair on $\Sigma$ if $H$ is constant. Moreover, it is traceless, i.e., ${\rm tr}(S) = 0= H(I,II_S)$, and the Hopf differential associated to $(I,II_S)$ is the Abresch-Rosenberg differential $\mathcal Q^{AR}$ in $\mathbb{M}^{2}(\kappa) \times \R$.

\subsection{$H-$surfaces in $\hmf$ with $\t \neq 0$.}

The main point in this section is to show that the Abresch-Rosenberg differential has an interpretation in terms of a Codazzi pair defined on any $H-$surface in $\hmf$ when $\t \neq 0$. In this case, we have that $H^{2} + \t^{2} > 0$. Define $\te \in [0,2\pi)$ by
$$e^{2i\te} = \frac{H - i\t}{\sqrt{H^{2} + \t^{2}}}.$$

Let $\Sigma \subset \hmf$ be a $H-$surface and $z$ be a local conformal parameter. Then, up to the complex constant $H + i\t$, we can re-define the Abresch-Rosenberg differential as:

$$Q^{AR}dz^{2} = \bigg( Q -\frac{\k - 4\t^{2}}{2(H + i\t)}\nt^{2} \bigg) dz^{2}. $$ 

One can re-write the above differential as:

$$Q^{AR}dz^{2} = \bigg( Q -\frac{\k - 4\t^{2}}{2\sqrt{H^{2} + \t^{2}}}(e^{i\te}\nt)^{2} \bigg) dz^{2}. $$

Given the tangential vector field $\mt$, define $\mt_{\te} = \cos\te \mt + \sin\te J\mt$, then $\la \mt_{\te}, \partial_{z} \ra = e^{i \te} \nt $, hence, 
$$Q^{AR}dz^{2} = \bigg( \la A\partial_{z},\partial_{z}\ra -\alpha \la \mt_{\te},\partial_{z} \ra^{2} \bigg) dz^{2},$$where $\alpha = \dfrac{\k - 4\t^{2}}{2 \sqrt{H^{2} + \t^{2}}}$  and $A$ is the usual shape operator. This leads us to the following definition:

\begin{definition}\label{ARForm}
Given a $H-$surface $\Sigma \subset \hmf$, the {\bf Abresch-Rosenberg fundamental form} is defined by:
\begin{equation}\label{form}
II_{AR}(X,Y) = II(X,Y) - \alpha \la \mt_{\te},X \ra \la \mt_{\te},Y \ra + \frac{\alpha \norm{\mt}^{2}}{2}\la X,Y \ra,
\end{equation}or equivalently, the {\bf Abresch-Rosenberg shape operator} $S_{AR}$ is defined by:
\begin{equation}\label{ARoperator}
S_{AR} X = A(X) - \alpha \langle \mt_{\theta}, X \rangle \mt_{\theta} + \frac{\alpha \norm{\mt}^{2}}{2}X ,
\end{equation}or, the {\bf traceless Abresch-Rosenberg shape operator} $S$ is defined by:
\begin{equation}\label{ARTraceless}
SX = S_{AR} X - HX = A(X) - \alpha \langle \mt_{\theta}, X \rangle \mt_{\theta} + \frac{\alpha \norm{\mt}^{2}}{2}X - HX,
\end{equation}where $ X,Y \in \mx (\Sigma).$
\end{definition}

First, we examine the geometric properties of the above quadratic form and its relation to the Abresch-Rosenberg differential:

\begin{prop}\label{PropPair}
The following equations hold for the fundamental pair $(I,II_{AR})$: 
\begin{enumerate}
\item[1.]$II_{AR}(\partial_{z},\partial_{z}) dz^{2} = Q^{AR}dz^{2}$, where $z$ is a local conformal parameter for $I$. 
\item[2.] $H(I,II_{AR}) = H(I,II)$.
\item[3.] $K_e(I,II_{AR}) = K_{e}(I,II) +  \alpha \meta{S \mt_{\theta}}{\mt _{\theta}} + \dfrac{\alpha^{2}\norm{\mt}^{4}}{4}$. 
\end{enumerate}

Moreover, the squared norm of the shape operator $\abs{A}^{2}$ and the squared norm of the traceless Abresch-Rosenberg shape operator $\abs{S}^{2}$ satisfy
\begin{equation}\label{ASTraceless}
|A|^{2} = |S|^{2} + 2\alpha \la S\mt_{\theta},\mt_{\theta} \ra + \frac{\alpha^{2}}{2}|\mt|^{4} + 2H^{2}.
\end{equation}

Moreover, it holds
\begin{equation}\label{Eq:Part11}
\frac{|\mt|^{4}}{2} - \frac{\la S\mt_{\theta},\mt_{\theta}\ra^{2}}{|S|^{2}} =\frac{\la S\mt_{\theta}, J \mt_{\theta}\ra^{2}}{|S|^{2}} .
\end{equation}
\end{prop}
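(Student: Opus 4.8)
The plan is to treat items~1--3 together with the two displayed identities \eqref{ASTraceless} and \eqref{Eq:Part11} as pointwise algebraic statements about the three self-adjoint endomorphisms $A$, $S_{AR}$ and $S$ of the two-dimensional tangent plane $T_p\Sigma$, and to verify them, at each point where $\mt\neq 0$, in the adapted orthonormal frame $e_1=\mt_\theta/\abs{\mt_\theta}$, $e_2=Je_1$. Two elementary facts will feed every computation. First, since $J$ is an isometry with $\la\mt,J\mt\ra=0$, one gets $\abs{\mt_\theta}^2=\cos^2\theta\,\abs{\mt}^2+\sin^2\theta\,\abs{J\mt}^2=\abs{\mt}^2$, so that $\mt_\theta=\abs{\mt}\,e_1$ and $J\mt_\theta=\abs{\mt}\,e_2$. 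Second, $A$ is the shape operator of an $H$-surface, so ${\rm tr}(A)=2H$, i.e. in the frame $A=\begin{pmatrix}a&b\\ b&c\end{pmatrix}$ with $a+c=2H$. The (measure-zero) set where $\mt=0$ is harmless: there every correction term vanishes and each identity reduces to a trivial relation, or follows by continuity.

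For item~1 I would just invoke the identity $\la\mt_\theta,\partial_z\ra=e^{i\theta}\nt$ and the computation carried out immediately before Definition~\ref{ARForm}: evaluating \eqref{form} on $(\partial_z,\partial_z)$ and using that $z$ is conformal, hence $\la\partial_z,\partial_z\ra=0$, kills the last term and leaves $II_{AR}(\partial_z,\partial_z)=\la A\partial_z,\partial_z\ra-\alpha(e^{i\theta}\nt)^2=Q^{AR}$. For item~2, the endomorphism $X\mapsto\la\mt_\theta,X\ra\mt_\theta$ is rank one with trace $\abs{\mt_\theta}^2=\abs{\mt}^2$, while the identity has trace $2$; so from \eqref{ARoperator}, ${\rm tr}(S_{AR})={\rm tr}(A)-\alpha\abs{\mt}^2+\alpha\abs{\mt}^2={\rm tr}(A)$, giving $H(I,II_{AR})=H(I,II)$.

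The computational core is item~3 and \eqref{ASTraceless}. In the adapted frame the rank-one term is ${\rm diag}(\alpha\abs{\mt}^2,0)$, so $S_{AR}=\begin{pmatrix}a-\tfrac{\alpha\abs{\mt}^2}{2}&b\\ b&c+\tfrac{\alpha\abs{\mt}^2}{2}\end{pmatrix}$ and $S=S_{AR}-H\,{\rm Id}$. I would then expand $\det(S_{AR})$ and compute $\abs{S}^2$ as the sum of squares of its entries, read off $\la S\mt_\theta,\mt_\theta\ra=\abs{\mt}^2\big(a-\tfrac{\alpha\abs{\mt}^2}{2}-H\big)$ from the $(1,1)$-entry, and finally substitute $c=2H-a$. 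After this substitution the raw expressions collapse to exactly $K_e(I,II_{AR})=K_e(I,II)+\alpha\la S\mt_\theta,\mt_\theta\ra+\tfrac{\alpha^2\abs{\mt}^4}{4}$ and to \eqref{ASTraceless}. I expect this to be the main obstacle: not conceptually, but because the signs of the $\tfrac{\alpha^2\abs{\mt}^4}{4}$ and $2H^2$ terms come out correctly \emph{only} after the trace relation $a+c=2H$ is used, so a bookkeeping slip is the easiest way to land on a wrong statement; the trace normalization is what does the real work.

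Finally, \eqref{Eq:Part11} I would deduce from the structural identity that, for a traceless symmetric operator $S$ on a Euclidean plane and any \emph{unit} vector $u$, one has $\la Su,u\ra^2+\la Su,Ju\ra^2=\tfrac12\abs{S}^2$; this is immediate from $\la Su,u\ra=-\la SJu,Ju\ra$ (tracelessness) together with $\abs{S}^2=2\big(\la Su,u\ra^2+\la Su,Ju\ra^2\big)$ in the basis $\{u,Ju\}$. Applying it to $u=\mt_\theta/\abs{\mt}$ and multiplying by $\abs{\mt}^4$ (again using $\abs{\mt_\theta}=\abs{\mt}$ and $J\mt_\theta=\abs{\mt}Ju$) yields $\la S\mt_\theta,\mt_\theta\ra^2+\la S\mt_\theta,J\mt_\theta\ra^2=\tfrac{\abs{\mt}^4}{2}\abs{S}^2$, which is precisely \eqref{Eq:Part11} after dividing by $\abs{S}^2$, valid away from the zeroes of $\abs{S}$.
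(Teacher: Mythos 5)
Your proposal is correct and follows essentially the same route as the paper: both arguments reduce everything to pointwise linear algebra in the adapted orthonormal frame $\set{\mt_{\theta}/\abs{\mt},\, J\mt_{\theta}/\abs{\mt}}$, using $\abs{\mt_{\theta}}=\abs{J\mt_{\theta}}=\abs{\mt}$ and the tracelessness of $S$ (your trace relation $c=2H-a$ is exactly the paper's identity $\la S\mt_{\theta},\mt_{\theta}\ra = -\la SJ\mt_{\theta},J\mt_{\theta}\ra$, which it derives via principal directions of $S$). The only cosmetic difference is that you verify \eqref{ASTraceless} by expanding matrix entries directly, whereas the paper deduces it from item 3 together with $\abs{A}^{2}=4H^{2}-2K_{e}(I,II)$ and $\abs{S}^{2}=2\left(H^{2}-K_{e}(I,II_{AR})\right)$.
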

\begin{proof}
Consider a local conformal parameter $z$ for $I$. A straightforward computation shows  
$$ II_{AR}(\partial_{z},\partial_{z}) dz^{2} = Q^{AR}dz^{2}$$ and $$ H(I,II_{AR}) = H(I,II).$$ 

Leu us compute $K_{e}(I,II_{AR})$. It is clear that $\norm{\mt_{\theta}} = \norm{J\mt_{\theta}} = \norm{\mt}$, then; 
\begin{equation}\label{CJ1}
\begin{split}
II_{AR}(\mt_{\theta},\mt_{\theta}) &= II(\mt_{\theta},\mt_{\theta}) - \frac{\alpha \norm{\mt}^{4}}{2}.\\
II_{AR}(\mt_{\theta},J\mt_{\theta}) &= II(\mt_{\theta},J\mt_{\theta}).\\
II_{AR}(J\mt_{\theta},J\mt_{\theta}) &= II(J\mt_{\theta},J\mt_{\theta}) + \frac{\alpha \norm{\mt}^{4}}{2}.
\end{split}
\end{equation} 

From the definition of the Abresch-Rosenberg quadratic form, we have $K_{e}(I,II_{AR}) = K_{e}(I,II)$ on the set $\mathcal{U} = \{ p\in \Sigma \, :  \, \norm{\mt}^{2}(p) = 0 \} $. Then, take $p \in \Sigma \setminus \mathcal{U}$ and consider the orthonormal basis in $T_{p}\Sigma$ defined by:
\begin{equation*}\label{CJ2}
e_{1} = \frac{\mt_{\theta}}{\norm{\mt}} \text{ and } e_{2} = \frac{J \mt_{\theta}}{\norm{\mt}}.
\end{equation*}

From \eqref{CJ1}, we obtain:
\begin{equation}\label{SFO1}
II_{AR}(e_{1},e_{1}) - II _{AR}(e_{2},e_{2}) = II(e_{1},e_{1}) - II(e_{2},e_{2}) - \alpha\norm{\mt}^{2},
\end{equation}and, since $\{e_{1},e_{2}\}$ is orthonormal at $p$ and \eqref{CJ1}, we have
\begin{eqnarray*}
K_{e}(I,II_{AR}) & = & II_{AR}(e_{1},e_{1})II_{AR}(e_{2},e_{2}) -II_{AR}(e_{1},e_{2})^2\\[3mm]
             & = & II(e_{1},e_{1})II(e_{2},e_{2}) -II(e_{1},e_{2})^2 \\
 & & \qquad + \frac{\alpha}{2}(II(e_{1},e_{1}) -II(e_{2},e_{2}))\norm{\mt}^{2} -\frac{\alpha^{2}}{4} \norm{\mt }^{4} .
\end{eqnarray*}

On the one hand, substituting \eqref{SFO1} into the above formula of $K_{e}(I,II_{AR})$ and simplifying terms, we get at $p$:
\begin{equation}\label{Kee}
K_{e}(I,II_{AR}) = K_{e}(I,II) + \frac{\alpha}{2}\left( II_{AR}(\mt _\theta, \mt_\theta) - II_{AR}(J\mt _\theta, J\mt_\theta)\right) + \frac{\alpha^{2}\norm{\mt}^{4}}{4}.
\end{equation}

On the other hand, recall that $S$ is traceless and hence, at a point $p\in \Sigma$, we can consider an orthonormal basis $\set{E_{1}, E_{2}}$ of principal directions for $S$, i.e, 
$$ S E_{1} = \lambda E_{1} , \,  SE_{2} = -\lambda E_{2} \text{ and } |S|^{2} =2 \lambda ^{2}.$$

Then, there exists $\beta \in [0,2\pi)$ such that 
$$ \mt_{\theta} = |\mt| ( \cos \beta \, E_{1} +  \sin \beta E_{2}) ,$$and hence, one can easily check
\begin{equation}\label{RRR}
\meta{S \mt _{\theta}}{\mt _{\theta}} = - \meta{S J\mt _{\theta}}{ J \mt_{\theta}}  .
\end{equation} 

Hence, from \eqref{ARTraceless} and \eqref{RRR}, we have 
\begin{equation*}
\begin{split}
II_{AR}(\mt _\theta, \mt_\theta) - II_{AR}(J\mt _\theta, J\mt_\theta) &= \meta{S \mt _{\theta}}{\mt _{\theta}} - \meta{S J\mt _{\theta}}{ J \mt_{\theta}} \\[3mm] 
 &= 2 \meta{S \mt_{\theta}}{\mt _{\theta}},
\end{split}
\end{equation*}thus, substituting the last equation into \eqref{Kee} yields the expression for $K_{e}(I,II_{AR})$. Also, a straightforward computation using \eqref{RRR} gives that
\begin{equation*}
\frac{|\mt|^{4}}{2} - \frac{\la S\mt_{\theta},\mt_{\theta}\ra^{2}}{|S|^{2}} =\frac{\la S\mt_{\theta}, J \mt_{\theta}\ra^{2}}{|S|^{2}} ,
\end{equation*}which shows \eqref{Eq:Part11}.

Finally, \eqref{ASTraceless} can be easily obtained by observing that $\abs{A}^{2} = 4H^{2} - 2 K_{e}$ and $\abs{S}^{2} =2 q^{AR} =2( H^{2}-K_{e}(I,II_{AR}) )$.
\end{proof}

Hence,  Lemma \ref{l0.1} implies:

\begin{lema}
Given any $H-$surface in $\hmf$, $H^2 +\t^2 \neq 0$, it holds: 
\begin{quote}
$\mathcal Q^{AR}$ is holomorphic if and only if $(I,II_{AR})$ is a Codazzi pair.
\end{quote}
\end{lema}

So, we can summarize the above discussion in the following

\begin{lema}[Key Lemma]\label{Lem:Key}
Given a $H-$surface $\Sigma \subset \hmf$, $H^2 +\t^2 \neq 0$, consider the symmetric $(2,0)-$tensor given by 
$$II_{AR}(X,Y) = II(X,Y) - \alpha \la \mt_{\te},X \ra \la \mt_{\te},Y \ra + \frac{\alpha \norm{\mt}^{2}}{2}\la X,Y \ra, $$where 
\begin{itemize}
\item $\alpha = \dfrac{\k - 4\t^{2}}{2 \sqrt{H^{2} + \t^{2}}}$,
\item $e^{2i\te} = \frac{H - i\t}{\sqrt{H^{2} + \t^{2}}}$ and 
\item $\mt_{\te} = \cos\te \mt + \sin\te J\mt$.
\end{itemize}

Then, $(I, II_{AR})$ is a Codazzi pair with constant mean curvature $H$. Moreover, the $(2,0)-$part of $II_{AR}$ with respect to the conformal structure given by $I$ agrees (up to a constant) with the Abresch-Rosenberg differential.
\end{lema}

And, as a consequence of Lemma \ref{l0.1} we have:

\begin{corollary}
\label{BoundedC}
Let $\Sigma$ be a $H-$surface in $\hmf$, $H^{2}+\t^{2} \neq 0$. Then, the following conditions are equivalent
\begin{itemize}
\item $\abs{S}$ is bounded, 
\item $\abs{A}$ is bounded, 
\item $\abs{K}$ is bounded.
\end{itemize}
\end{corollary}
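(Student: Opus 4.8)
The plan is to prove the three-way equivalence in Corollary \ref{BoundedC} by chaining together the algebraic identities already established in Proposition \ref{PropPair} with the Gauss equation \eqref{GaussZ}. The key observation is that each of the three quantities $\abs{S}$, $\abs{A}$, and $\abs{K}$ can be controlled by the others once we notice that the ``error terms'' appearing in the relevant identities are automatically bounded. Indeed, the tangential field $\mt$ satisfies $\norm{\mt}^2 = 1 - \nu^2 \le 1$ by \eqref{Norm}, so every expression involving $\mt$ (such as $\norm{\mt}^4$ or $\norm{\mt_\theta}^2$) is bounded by a universal constant depending only on $\kappa$ and $\tau$ through $\alpha$.

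First I would establish $\abs{S} \text{ bounded} \Longleftrightarrow \abs{A} \text{ bounded}$ using \eqref{ASTraceless}. Rearranging that identity gives
\begin{equation*}
\abs{A}^2 = \abs{S}^2 + 2\alpha \meta{S\mt_\theta}{\mt_\theta} + \frac{\alpha^2}{2}\abs{\mt}^4 + 2H^2.
\end{equation*}
Since $\abs{\meta{S\mt_\theta}{\mt_\theta}} \le \abs{S}\,\norm{\mt_\theta}^2 = \abs{S}\,\norm{\mt}^2 \le \abs{S}$ by Cauchy--Schwarz together with \eqref{Norm}, the cross term is dominated by $\abs{S}$, and the remaining terms $\frac{\alpha^2}{2}\abs{\mt}^4 + 2H^2$ are bounded constants. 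Thus $\abs{A}^2$ is a function of $\abs{S}$ plus terms of lower order and bounded size, so each of $\abs{S}$, $\abs{A}$ bounded forces the other; concretely one gets inequalities of the form $\abs{A}^2 \le \abs{S}^2 + C_1\abs{S} + C_2$ and $\abs{S}^2 \le \abs{A}^2 + C_1\abs{S} + C_2$, from which boundedness transfers in both directions.

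Next I would connect $\abs{A}$ and $\abs{K}$ via the Gauss equation. Recall from \eqref{GaussZ} that $K = K_e + \t^2 + (\kappa - 4\t^2)\nu^2$, and that for the Codazzi pair $K_e = K_e(I,II)$ relates to the shape operator through $\abs{A}^2 = 4H^2 - 2K_e$ (used at the end of the proof of Proposition \ref{PropPair}). Eliminating $K_e$ yields $\abs{A}^2 = 4H^2 - 2\bigl(K - \t^2 - (\kappa - 4\t^2)\nu^2\bigr)$, i.e. $\abs{A}^2 = -2K + 2(\kappa-4\t^2)\nu^2 + (4H^2 + 2\t^2)$. Because $\nu^2 \le 1$, the term $(\kappa - 4\t^2)\nu^2$ is bounded, so $\abs{A}^2$ and $\abs{K}$ differ by a bounded quantity; hence $\abs{A}$ bounded $\Longleftrightarrow$ $\abs{K}$ bounded. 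Combining this with the first equivalence closes the cycle.

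The main obstacle, such as it is, is bookkeeping rather than conceptual: one must make sure the cross term $\meta{S\mt_\theta}{\mt_\theta}$ in \eqref{ASTraceless} is genuinely absorbed and cannot, for instance, produce cancellation that makes $\abs{A}$ bounded while $\abs{S}$ blows up. This is handled cleanly by the Cauchy--Schwarz bound $\abs{\meta{S\mt_\theta}{\mt_\theta}} \le \abs{S}$ noted above, which shows the cross term grows at most linearly in $\abs{S}$ while the dominant terms grow quadratically, so no such pathological cancellation can occur. All three equivalences then follow from elementary inequalities, and no appeal to the differential structure or to completeness of $\Sigma$ is needed — the statement is purely pointwise-algebraic once \eqref{Norm}, \eqref{GaussZ}, and \eqref{ASTraceless} are in hand.
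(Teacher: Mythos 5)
Your proof is correct and takes essentially the same route as the paper's: the equivalence $\abs{S}$ bounded $\Leftrightarrow$ $\abs{A}$ bounded is read off from \eqref{ASTraceless} (with Cauchy--Schwarz and $\norm{\mt}\leq 1$ absorbing the cross term), and the equivalence with $\abs{K}$ follows from $4H^{2}-2K_{e}=\abs{A}^{2}$ combined with the Gauss equation. The only difference is that you make explicit the quadratic-versus-linear growth argument that the paper leaves implicit in its one-line justification.
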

\begin{proof}
On the one hand, from \eqref{ASTraceless} is clear that $\abs{A}$ is bounded if, and only if, $\abs{S}$ is bounded. On the other hand, from $4H^{2}-2K_{e}=\abs{A}^{2}$ and the Gauss equation, we have that $\abs{K}$ is bounded if, and only if, $\abs{A}$ is bounded.
\end{proof}


\section{Simons' type formula in $\hmf.$}

In this section, we will obtain a Simons' type formula for the traceless Abresch-Rosenberg shape operator $S$ defined on a $H-$surface  $\Sigma \subset \hmf$, $\t \neq 0$. The Simons' type formula follows form the fact that the traceless Abresch-Rosenberg shape operator is Codazzi and the work of Cheng-Yau \cite{Yau}.

\begin{theorem}\label{Simfor}
Let $\Sigma$ be a $H-$surface in $\hmf$. Then, the traceless Abresch-Rosenberg shape operator satisfies
\begin{equation}\label{lapla}
\frac{1}{2} \Delta |S|^{2} =|\n S|^{2}  + 2 K |S|^{2},
\end{equation}or, equivalently, away from the zeroes of $\abs{S}$,  
\begin{equation}\label{lapla2}
\abs{S} \, \Delta \abs{S} - 2 K \abs{S}^2 =  \abs{\nabla \abs{S}}^{2}.
\end{equation}
\end{theorem}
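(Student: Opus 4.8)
The plan is to apply the abstract Cheng--Yau Simons' formula \eqref{Simmons} to the tensor $\Phi = II_{AR}^{0}$ whose shape operator is the traceless Abresch--Rosenberg operator $S$. By the Key Lemma \ref{Lem:Key}, the pair $(I,II_{AR})$ is Codazzi with constant mean curvature $H$, so its traceless part $S = S_{AR}-HI$ is a traceless symmetric Codazzi $(2,0)$-tensor on the surface $\Sigma$; indeed subtracting the constant-trace umbilic part $HI$ preserves the Codazzi equation \eqref{ecuacioncodazzi}. Since $\Sigma$ is a surface, $\dim \Sigma = 2$, and everything collapses dramatically: $\mathrm{tr}(\Phi) \equiv 0$, so the middle term $\sum_i \lambda_i (\mathrm{tr}\,\Phi)_{ii}$ in \eqref{Simmons} vanishes identically.

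The key simplification to carry out is the curvature term. In $n=2$ the only nontrivial components reduce to the single Gaussian curvature $R_{1212} = K$, so $\tfrac12\sum_{i,j} R_{ijij}(\lambda_i - \lambda_j)^2 = K(\lambda_1 - \lambda_2)^2$. Because $S$ is traceless with eigenvalues $\pm\lambda$ (as already noted before \eqref{RRR}), we have $(\lambda_1-\lambda_2)^2 = (2\lambda)^2 = 4\lambda^2 = 2|S|^2$, so this term is exactly $2K|S|^2$. Substituting into \eqref{Simmons} and writing $\sum_{i,j,k}(\phi_{ij,k})^2 = |\nabla S|^2$ yields
\begin{equation*}
\frac{1}{2}\Delta|S|^2 = |\nabla S|^2 + 2K|S|^2,
\end{equation*}
which is \eqref{lapla}. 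I would emphasize that the main substantive input is entirely borrowed, namely that $S$ is Codazzi; this is precisely what the Key Lemma supplies, and it is what makes \eqref{Simmons} applicable at all (the Codazzi hypothesis $\phi_{ij,k}=\phi_{ik,j}$ is essential to Cheng--Yau's derivation).

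For the equivalent form \eqref{lapla2}, I would work pointwise away from the zero set of $|S|$, where $|S|$ is smooth. There one uses the Kato-type identity together with the standard computation: from $\tfrac12\Delta|S|^2 = |S|\Delta|S| + |\nabla|S||^2$ and \eqref{lapla} one gets $|S|\Delta|S| + |\nabla|S||^2 = |\nabla S|^2 + 2K|S|^2$. The passage to the stated form requires the refined Kato inequality, which for a traceless Codazzi tensor on a surface is in fact an equality, $|\nabla S|^2 = |\nabla|S||^2 + |S|^2|\nabla\log|S||^2$-type relation; more directly, for such $S$ one has the identity $|\nabla S|^2 - |\nabla|S||^2 = |\nabla|S||^2$, i.e. $|\nabla S|^2 = 2|\nabla|S||^2$, giving $|S|\Delta|S| - 2K|S|^2 = |\nabla|S||^2$. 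The one genuine obstacle, and the step I would be most careful about, is justifying this sharp Kato equality: it follows because a traceless symmetric Codazzi $2$-tensor on a surface has a holomorphic $(2,0)$-part (its Hopf differential, here $\mathcal{Q}^{AR}$, is holomorphic by Theorem \ref{thQH}), and holomorphicity forces the derivative of $S$ to be controlled so that $|\nabla S|^2 = 2|\nabla|S||^2$ exactly. I would verify this last equality by expressing $S$ locally via the holomorphic function $Q^{AR}$ and computing both norms directly in the conformal parameter $z$, which is the routine but essential calculation underpinning \eqref{lapla2}.
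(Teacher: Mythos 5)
Your proposal is correct and follows essentially the same route as the paper: both invoke the Key Lemma to conclude that the traceless operator $S$ is Codazzi (subtracting $HI$ with $H$ constant preserves the Codazzi equation), apply Cheng--Yau's formula \eqref{Simmons} with the dimension-two simplifications $R_{ijij}=K$, vanishing trace term, and $(\lambda_1-\lambda_2)^2=2|S|^2$, and then derive \eqref{lapla2} from $\Delta|S|^2 = 2|S|\Delta|S|+2|\nabla|S||^2$ together with the sharp Kato identity $|\nabla S|^2 = 2|\nabla|S||^2$ for traceless Codazzi tensors on surfaces. The only difference is that the paper cites \cite{Brendle} for that identity, whereas you propose to verify it directly via the holomorphicity of $\mathcal{Q}^{AR}$ in a conformal parameter, which is a valid and standard justification of the same fact.
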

\begin{proof}
Since $(I,II_{AR})$ is a Codazzi pair on $\Sigma$, from Lemma \ref{l0.1} we get that $(I,II_{AR}- H\, I)$ is also a Codazzi pair on $\Sigma$, observe that $II_{AR}- H\, I$ is nothing but the traceless Abresch-Rosenberg fundamental form. Hence, from \eqref{Simmons} (cf. \cite{Yau}), we obtain
$$ \frac{1}{2}\Delta \abs{S}^{2} = \abs{\nabla S} ^{2} + 2 K \abs{S}^{2} ,$$as claimed. Here, we have used that $R_{ijij}= K $ since we are working in dimension two. Also, since $S$ is traceless, the two eigenvalues, $\lambda _{i}$, are opposite signs so $(\lambda _{i} -\lambda _{j})^{2} = 4 \lambda _{i}^{2} = 2 \abs{S}^2$.

To obtain \eqref{lapla2}, using $\Delta \abs{S}^{2} = 2 \abs{S}\Delta \abs{S} + 2 \abs{\nabla \abs{S}}^{2}$, we get
\begin{equation*}
\abs{S}\Delta \abs{S} + \abs{\nabla \abs{S}}^{2} = |\n S|^{2} +2 K \abs{S}^{2}
\end{equation*}

Thus, since $\Sigma$ has dimension two and $S$ is traceless and Codazzi, it holds (cf. \cite{Brendle})
$$ |\n S|^{2} = 2\abs{\nabla \abs{S}}^{2} , $$and we finally obtain \eqref{lapla2}.
\end{proof}

\subsection{Classification results for $H-$surfaces in $\hmf$}

First, we shall recall the classification theorem when $q^{AR}$ vanishes identically.

\begin{lema}[\cite{Rosen,Rosen1,ER2}]
\label{QVa}
Let $\Sigma \subset \hmf$ be a complete $H-$surface whose Abresch-Rosenberg differential vanishes. Then $\Sigma$ is invariant by certain one parameter subgroup of isometries of $\hmf$.\\ 
\end{lema}

Hence, the above Lemma motivates the following:

\begin{definition}\label{Def:ARSurface}
Let $\Sigma$ be a complete $H-$surface in $\hmf$. We say that $\Sigma$ is an {\bf Abresch-Rosenberg surface} if its Abresch-Rosenberg differential vanishes identically.
\end{definition}

Since $\hmf$ is a Riemannian submersion $\pi :\hmf \to \M^{2}(\kappa)$, given $\gamma$ a regular curve  in $\M^{2}(\k)$, $\Sigma _\gamma:=\pi^{-1}(\gamma)$ is a surface in $\hmf$ satisfying that $\xi$ is a tangential vector field along $\Sigma _\gamma$, in this case $\nu = 0$. So, $\xi$  is a parallel vector field along $\Sigma _\gamma$ and hence $\Sigma _\gamma$ is flat and its mean curvature is given by $2 H = k_{g}$, where $k_{g}$ is the geodesic curvature of $\gamma$ in $\M^{2}(\kappa)$ (cf. \cite[Proposition 2.10]{EO}). We will call $\Sigma _\gamma:=\pi^{-1}(\gamma)$ a {\bf Hopf cylinder} in $\hmf$ over the curve $\gamma$. If $\gamma$ is a closed curve, $\Sigma _\gamma$ is a flat Hopf cylinder and additionally, if $\pi$ is a circle Riemannian submersion, $\Sigma _\gamma$ is a {\bf Hopf torus}. The latter case occurs when $\hmf$ is a Berger sphere (see, \cite[Theorem 1]{To2}), i.e., $\kappa =1$ and $\tau \neq 0$. 

Now, with the definition of Hopf cylinders in hand, we classify $H-$surfaces in $\hmf$ with constant non-zero Abresch-Rosenberg map $q^{AR}$ (cf. \cite{ER2} for a different proof).

\begin{theorem}\label{Lem:qconstant}
Let $\Sigma \subset \hmf$ be a complete $H-$surface and suppose $q^{AR}$ is a positive constant map on $\Sigma$, then $\Sigma$ is a Hopf cylinder over a complete curve of curvature $2H$ on $\mathbb{M}^{2}(\k)$.
\end{theorem}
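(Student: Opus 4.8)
The starting point is the identity $\abs{S}^{2}=2q^{AR}$ obtained at the end of the proof of Proposition~\ref{PropPair}: the hypothesis that $q^{AR}$ is a positive constant is then equivalent to saying that $\abs{S}$ is a positive constant, so $S$ has no zeroes and $\nabla\abs{S}=0$. I would feed this into the Simons' type formula of Theorem~\ref{Simfor}. Since $S$ is a traceless Codazzi tensor on a surface, the identity $\abs{\nabla S}^{2}=2\abs{\nabla\abs{S}}^{2}$ used in that proof (cf. \cite{Brendle}) gives $\nabla S=0$, i.e. $S$ is parallel; and then \eqref{lapla} reads $0=\tfrac12\Delta\abs{S}^{2}=\abs{\nabla S}^{2}+2K\abs{S}^{2}=2K\abs{S}^{2}$, which forces $K\equiv 0$ because $\abs{S}^{2}>0$. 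Hence $\Sigma$ is flat and carries a parallel traceless shape operator $S$.

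Next I would reduce the whole statement to proving that the angle function $\nu$ is constant, because the implication ``$\nu$ constant $\Rightarrow\nu\equiv0$'' is short. Indeed, if $\nu$ is constant then \eqref{DerNu} gives $\la\t JX-AX,\mt\ra=0$ for every $X$, that is $A\mt=-\t J\mt$. Assuming $\mt\neq 0$, this relation yields $\la A\mt,\mt\ra=0$ and $\la A\mt,J\mt\ra=-\t\norm{\mt}^{2}$, so that together with ${\rm tr}\,A=2H$ the extrinsic curvature is $K_{e}=\det A=-\t^{2}$. Substituting into the Gauss equation \eqref{Gauss} and using $K=0$ yields $(\k-4\t^{2})\nu^{2}=0$, whence $\nu=0$ since $\k-4\t^{2}\neq0$. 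The excluded case $\mt\equiv0$ (i.e. $\nu^{2}\equiv1$) is impossible: then \eqref{NablaT} forces $AX=\t JX$ for all $X$, contradicting the symmetry of $A$ when $\t\neq0$, and violating $K=0$ through \eqref{Gauss} when $\t=0$. Thus everything comes down to showing that $\nu$ is constant.

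This is the main work, and the main obstacle. Here I would exploit that $S$ is parallel and nonzero with constant opposite eigenvalues $\pm\lambda$: its unit eigenfields form a parallel orthonormal frame $\{E_{1},E_{2}\}$ (the eigenvalues being distinct constants), and since $\Sigma$ is flat and complete this frame integrates to global flat coordinates on the universal cover $\widetilde\Sigma\cong\R^{2}$. In this frame $A=S+HI-\tfrac{\alpha}{2}\norm{\mt}^{2}I+\alpha\la\mt_{\te},\cdot\ra\mt_{\te}$ (from \eqref{ARTraceless}) is an explicit function of the components $(\la\mt,E_{1}\ra,\la\mt,E_{2}\ra)$ with constant coefficients, so that \eqref{NablaT} and \eqref{DerNu} turn the evolution of $(\la\mt,E_{1}\ra,\la\mt,E_{2}\ra,\nu)$ into a first-order autonomous system, constrained by \eqref{Norm}. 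I expect the cleanest realization to be a proof that $\nabla\nu$ is a parallel vector field; then $\nu$ is an affine function on the complete flat surface $\Sigma$ and, being bounded by \eqref{Norm}, is necessarily constant. Should the parallelism of $\nabla\nu$ prove elusive, the same conclusion can be extracted from a phase-plane analysis of the autonomous system, where completeness (the solution being defined for all values of the flat coordinates) again forces constancy. This completeness input is essential, since it is exactly what rules out the local, non-Hopf surfaces carrying a constant $q^{AR}$.

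Finally, once $\nu\equiv0$, the vertical Killing field $\xi=\mt$ is everywhere tangent to $\Sigma$, so $\Sigma$ is saturated by fibers of the Riemannian submersion $\pi$ and therefore equals $\pi^{-1}(\gamma)$ for a regular curve $\gamma\subset\M^{2}(\k)$; as recalled just before the statement, such a Hopf cylinder satisfies $2H=k_{g}$, so $\gamma$ has constant curvature $2H$, and the completeness of $\Sigma$ forces the completeness of $\gamma$, which is the desired conclusion.
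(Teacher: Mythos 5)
Your opening and closing steps are correct and essentially parallel the paper's: from $\abs{S}^{2}=2q^{AR}$ (proof of Proposition \ref{PropPair}) the hypothesis gives $\abs{S}$ a positive constant, and then — where the paper invokes Milnor's formula $\Delta\ln q^{AR}=4K$, you invoke Theorem \ref{Simfor} together with $\abs{\nabla S}^{2}=2\abs{\nabla\abs{S}}^{2}$ — one gets $K\equiv 0$ and $S$ parallel; likewise your reduction ``$\nu$ constant $\Rightarrow \nu\equiv 0 \Rightarrow$ Hopf cylinder'' is sound (and more self-contained than the paper, which simply cites \cite[Theorem 2.2]{ER} at this point). The problem is that the heart of the theorem, proving that $\nu$ (equivalently $\abs{\mt}$) is constant, is exactly the step you never carry out. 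You set up the parallel eigenframe of $S$, write \eqref{NablaT} and \eqref{DerNu} as a first-order system for $(\la\mt,E_{1}\ra,\la\mt,E_{2}\ra,\nu)$, and then offer two unexecuted hopes: that $\nabla\nu=-(A\mt+\tau J\mt)$ is parallel (no computation is given, and none is evident — it would require controlling $\nabla A$, which, unlike $\nabla S$, does not vanish), or that a ``phase-plane analysis'' plus completeness forces constancy. The latter is not a valid principle: first-order total differential systems of this kind admit globally defined non-constant (for instance periodic) solutions, so global definedness by itself rules out nothing; what kills non-constant solutions must be the integrability constraints of the system, and you never identify them.

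That missing constraint is precisely what the paper uses to close the argument: the Gauss equation. Since $q^{AR}=c^{2}$ is constant, $K_{e}(I,II_{AR})=H^{2}-c^{2}$ is constant; substituting item 3 of Proposition \ref{PropPair} into \eqref{Gauss} with $K=0$ yields the pointwise identity
$$\frac{\alpha^{2}\abs{\mt}^{4}}{4}+(\kappa-4\t^{2})\abs{\mt}^{2}+\alpha\la S\mt_{\theta},\mt_{\theta}\ra+c^{2}-H^{2}-\t^{2}-(\kappa-4\t^{2})=0 ,$$
which, in the flat coordinates in which $I$ and $II_{AR}$ have constant coefficients (your frame $\set{E_{1},E_{2}}$; the paper's Milnor coordinates), becomes a polynomial relation with constant coefficients and nonzero leading term in the components $(u,v)$ of $\mt_{\theta}$; from this, using $\alpha\neq 0$, the paper deduces that $\abs{\mt}$, hence $\nu$, is constant. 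Note that this mechanism is algebraic and pointwise, which also undercuts the heuristic guiding your alternative: completeness is not what excludes putative ``local non-Hopf examples with constant $q^{AR}$'' (it enters only through passing to the universal cover and through the citation of \cite[Theorem 2.2]{ER}), so a dynamical argument leaning on completeness is aimed at the wrong target. As it stands, your proposal reduces the theorem to its hardest claim and stops there; to complete it you must bring the Gauss equation back in, exactly as above.
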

\begin{proof}
We can assume, without loss of generality, that $\Sigma $ is simply-connected by passing to the universal cover. Since $q^{AR}$ is a positive constant,  (cf. \cite[Main Lemma]{Mi}) we have $ 0= \Delta \ln q ^{AR} = 4 K$, that is, the Gaussian curvature vanishes identically on $\Sigma$. Moreover, since $ q^{AR} = H^2 - K _e (I, II_{AR}) = c^{2} >0  $ is constant, we obtain that $K_e (I, II_{AR}) = H^2 - c^{2} $ is constant on $\Sigma $. 

On the one hand, since $\Sigma$ is simply connected, $q^{AR}= c^{2}>0$, there exists a global conformal parameter $z= x+i y$ (cf. \cite[Main Lemma]{Mi}), so that
$$ cI = dx^{2} + dy^{2} \text{ and } cII_{AR} = (H+c)dx^{2} + (H-c) dy^{2}.$$

On the other hand, combining the Gauss equation \eqref{GaussZ} and the expression of $K_{e}(I,II_{AR})$ given by item 3 in Proposition \ref{PropPair}, we obtain
$$ \tau ^{2} + (\kappa -4 \t ^{2})(1-\abs{\mt}^{2}) = \alpha \meta{S\mt _{\theta}}{\mt _{\theta}} +\frac{\alpha ^{2}\abs{\mt}^{4}}{4} - H^{2}+c^{2} ,$$or, in other words, 
$$  \frac{\alpha ^{2}\abs{\mt}^{4}}{4} +(\kappa -4\t ^{2})\abs{\mt}^{2} +  \alpha \meta{S\mt _{\theta}}{\mt _{\theta}} + c^2 - H^2- \tau^{2} - (\kappa - 4 \tau ^2) = 0 \text{ on } \Sigma .$$

Set $u=\meta{\mt _{\theta}}{\partial_{x}}$ and $v=\meta{\mt_{\theta}}{\partial _{y}}$. Thus, the last equations imply that there exists a polynomial in the variables $u$ and $v$, $P(u,v)$, whose leading term is $ \frac{\alpha }{2}(u^{2}+v^{2})^{2} $. Since $\alpha  \neq 0$, we obtain that $\norm{\mt } $ is constant on $\Sigma$ and hence $\nu $ is constant along $\Sigma$, which implies that $\Sigma $ is a vertical cylinder (cf. \cite[Theorem 2.2]{ER}).
\end{proof}

\section{Finite Abresch-Rosenberg Total Curvature}

Complete $H-$surfaces $\Sigma \subset \r ^{3}$ of finite total curvature, i.e., those whose $L^{2}-$norm of its traceless second fundamental form is finite, are of capital importance on the comprehension of $H-$surfaces. If $\Sigma $ has constant nonzero mean curvature and finite total curvature, then it must be compact. If $\Sigma$ is minimal, Osserman's Theorem gives an impressive description of them. 

When we consider complete $H-$surfaces in $\hmf$, the traceless part of the second fundamental form encodes less information about the surface. So, it is also convenient to consider the traceless part of the Abresch-Rosenberg fundamental form.

\begin{definition}
Let $\Sigma \subset \hmf$ be a complete $H-$surface, $H^{2}+\tau ^{2} \neq 0$. We say that $\Sigma$ has finite Abresch-Rosenberg total curvature if the $L^{2}-$norm of the traceless Abresch-Rosenberg form is finite, i.e, 
$$ \int_{\Sigma} \abs{S} ^{2} dv_{g} < +\infty .$$
where $dv_{g}$ is the volume element of $\Sigma$.
\end{definition}

We must point out here that the family of complete constant mean curvature surfaces of finite Abresch-Rosenberg total curvature is large. Obviously, any Abresch-Rosenberg surface has finite Abresch-Rosenberg total curvature. We focus on $H=1/2$ surfaces in $\h ^2 \times \r $ to show this fact. Recall the following result of Fern\'{a}ndez-Mira:

\begin{quote}
{\bf Theorem \cite[Theorem 16]{FM}.} {\it Any holomorphic quadratic differential on an open simply connected Riemann surface is the Abresch-Rosenberg differential of some complete surface $\Sigma$ with $H=1/2$ in $\h ^2 \times \r$. Moreover, the space of noncongruent complete mean curvature one half surfaces in $\h^2 \times \r$ with the same Abresch-Rosenberg differential is generically infinite.}
\end{quote}

Then, we take the disk $\d $ as our open Riemann surface and a holomorphic quadratic differential $\mathcal Q $ on $\d$ that extends continuously to the boundary. Let $\Sigma$ be the $H=1/2$ surface constructed in \cite[Theorem 16]{FM}. Now, we will see that $\Sigma $ has finite Abresch-Rosenberg total curvature. 

For a conformal parameter $z \in \d $, we have $I = 2 \lambda |dz|^2$ and hence $\mathcal Q = f(z) dz^2$, for a holomorphic function $f : \d \to \c $ that extends continuously to the boundary. Then, the squared norm of the traceless Abresch-Rosenberg shape operator is given by $\abs{S}^2 = \frac{\abs{f}^2}{4\lambda ^2}$ and $dv_g = 4 \lambda ^2 \abs{dz}^2$. Thus, we have 
$$ \int _\Sigma \abs{S}^2 \, dv_g =  \int _\d \abs{f(z)}^2 \abs{dz}^2 < +\infty ,$$as claimed. 

The study of constant mean curvature surfaces of finite Abresch-Rosenberg total curvature is complementary to the study of surfaces of finite total curvature (see \cite{HR,HM} and references therein). Note that we are assuming $H^{2}+\tau ^{2}\neq 0$, otherwise we consider the usual Abresch-Rosenberg operator given by \eqref{Oper}. In \cite{BCF}, the authors studied complete $H-$surfaces of finite Abresch-Rosenberg total curvature in product spaces $\mer$. The fundamental tool used in \cite{BCF} is the Simons' type formula for $\abs{S}$ developed in \cite{Batista} when $\t =0$. Hence, using our Simons' type formula (Theorem \ref{Simfor}), we can obtain:

\begin{prop}\label{In}
Let $\Sigma$ be an immersed $H-$surface, $H^{2}+\t^{2} \neq 0$, in $\hmf$ and set $u = \abs{S}$, where $S$ is the traceless Abresch-Rosenberg form. Then 
\begin{equation}
\label{In1}
-\Delta u \leq a u^{3} + bu,
\end{equation}
where $a$, $b$ are constants depending on $\kappa -4\t^{2}$ and $H$.
\end{prop}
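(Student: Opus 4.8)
The plan is to feed the Simons' type formula of Theorem \ref{Simfor} into the pointwise algebraic identity relating the Gaussian curvature $K$ to $u=\abs{S}$, and then to absorb all the lower order geometric data into the constants $a,b$ by elementary estimates. First I would work on the open set where $u>0$; the zeroes of $\abs{S}$ are isolated, being zeroes of the holomorphic Abresch-Rosenberg differential (Theorem \ref{thQH}), unless $S\equiv 0$, in which case \eqref{In1} is trivial. On this set the equivalent form \eqref{lapla2} of the Simons' type formula reads $u\,\Delta u-2K u^{2}=\abs{\nabla u}^{2}\geq 0$, whence dividing by $u>0$ yields the one-sided bound $-\Delta u\leq -2K\,u$. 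Thus it suffices to estimate $-2K\,u$ from above by $a u^{3}+b u$.

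Next I would make the dependence of $K$ on $u$ explicit. Combining the Gauss equation \eqref{GaussZ} with $\abs{A}^{2}=4H^{2}-2K_{e}$ and the identity \eqref{ASTraceless}, a direct substitution gives
$$-2K = u^{2}+2\alpha\meta{S\mt_{\theta}}{\mt_{\theta}}+\tfrac{\alpha^{2}}{2}\norm{\mt}^{4}-2H^{2}-2\t^{2}-2(\k-4\t^{2})\nu^{2},$$
so that, after multiplying by $u$, one has $-2K\,u = u^{3}+2\alpha\meta{S\mt_{\theta}}{\mt_{\theta}}u+\tfrac{\alpha^{2}}{2}\norm{\mt}^{4}u-2(H^{2}+\t^{2})u-2(\k-4\t^{2})\nu^{2}u$. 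The cubic term $u^{3}$ already furnishes the leading $a u^{3}$ contribution; everything else must be shown to be bounded by $a u^{3}+b u$.

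The remaining step is to control the intermediate terms. Since $S$ is symmetric and traceless its eigenvalues are $\pm u/\sqrt{2}$, so the operator norm of $S$ equals $u/\sqrt{2}$ and Cauchy-Schwarz gives $\abs{\meta{S\mt_{\theta}}{\mt_{\theta}}}\leq \tfrac{u}{\sqrt{2}}\norm{\mt_{\theta}}^{2}=\tfrac{u}{\sqrt{2}}\norm{\mt}^{2}\leq \tfrac{u}{\sqrt{2}}$, using $\norm{\mt}^{2}=1-\nu^{2}\leq 1$ from \eqref{normZ}. Hence the cross term is bounded by $\sqrt{2}\abs{\alpha}\,u^{2}$, the term $\tfrac{\alpha^{2}}{2}\norm{\mt}^{4}u$ by $\tfrac{\alpha^{2}}{2}u$, and $2\abs{\k-4\t^{2}}\nu^{2}u$ by $2\abs{\k-4\t^{2}}u$, while $-2(H^{2}+\t^{2})u\leq 0$ may be discarded in an upper bound. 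Finally I would reshape the surviving quadratic contribution via Young's inequality $u^{2}=(u^{3})^{1/2}(u)^{1/2}\leq\tfrac{1}{2}(u^{3}+u)$, which splits $\sqrt{2}\abs{\alpha}u^{2}$ between the $u^{3}$ and the $u$ terms. Collecting coefficients yields \eqref{In1} with $a=1+\tfrac{\abs{\alpha}}{\sqrt{2}}$ and $b=\tfrac{\abs{\alpha}}{\sqrt{2}}+\tfrac{\alpha^{2}}{2}+2\abs{\k-4\t^{2}}$, both expressible through $\alpha$ and $\k-4\t^{2}$.

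The only delicate points are the handling of the cross term $\meta{S\mt_{\theta}}{\mt_{\theta}}$ — where the factor $\sqrt{2}$ distinguishing the Hilbert-Schmidt norm $\abs{S}$ from the operator norm of $S$ must be tracked correctly, as it governs the size of $a$ — and the passage across the zero set of $u$, where $u$ fails to be smooth. There \eqref{In1} is to be read in the barrier (or distributional) sense, or one simply restricts to the complement of the isolated zeroes as above.
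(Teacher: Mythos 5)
Your proposal is correct and takes essentially the same route as the paper's proof: both start from the Simons' type formula \eqref{lapla2}, rewrite $K$ via the Gauss equation together with \eqref{ASTraceless}, bound the cross term $\la S\mt_{\theta},\mt_{\theta}\ra$ using Cauchy--Schwarz and the fact that the traceless $S$ has eigenvalues $\pm\abs{S}/\sqrt{2}$, and split the resulting quadratic term with Young's inequality, arriving at the same leading constant $a=1+\abs{\alpha}/\sqrt{2}$ (your $b$ is slightly cruder than the paper's, which keeps $-2H^{2}-2\t^{2}$ and uses $\min\{0,\kappa-4\t^{2}\}$ instead of $\abs{\kappa-4\t^{2}}$, but it is equally valid). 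Your explicit treatment of the zero set of $u$ (isolated zeroes of the holomorphic Abresch--Rosenberg differential, or $S\equiv 0$) is a refinement that the paper leaves implicit.
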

\begin{proof}
First, from \eqref{ASTraceless} and $4H^{2}-\abs{A}^{2}=2K_{e}$, we have
\begin{equation}
\label{Eq1}
K_{e} = H^{2} -\frac{1}{2}\abs{S}^{2} - \alpha \la S \mt_{\theta}, \mt_{\theta} \ra  - \alpha^{2} \frac{\abs{\mt}^{4}}{4}.
\end{equation}

Substituting  \eqref{Eq1} into the Gauss equation $K = K_{e} + \t^{2} + (\kappa-4\t^{2})\nu^{2}$, we can rewrite the Gaussian curvature $K$ as follows
\begin{equation}
\label{eq2}
K = (\kappa-4\t^{2})(1-\abs{\mt}^{2})+\t^{2}+H^{2}-\frac{1}{2}\abs{S}^{2}-\alpha \la S\mt_{\theta},\mt_{\theta} \ra - \alpha^{2}\frac{\abs{\mt}^{4}}{4}.
\end{equation}

Next, substituting \eqref{eq2} into \eqref{lapla2}, we obtain:
\begin{equation}
\label{inequality1}
\begin{split}
\Delta \abs{S} & \geq 2\abs{S}\bigg( (\k-4\t^{2})\nu^{2}+\t^{2}+H^{2}-\frac{1}{2}\abs{S}^{2}-\alpha \la S\mt_{\theta},\mt_{\theta} \ra - \alpha^{2}\frac{\abs{\mt}^{4}}{4} \bigg) \\
& \geq -\abs{S}^{3} - \abs{S} \bigg( -2 \min\{0,\k-4\t^{2} \}-2\t^{2}-2H^{2}+ \frac{1}{2}\alpha^{2}+2\alpha \la S \mt_{\theta},\mt_{\theta} \ra  \bigg).
\end{split}
\end{equation}

Since $S$ is traceless, we have that $\abs{S\mt_{\theta}} = \frac{1}{\sqrt{2}}\abs{S}\abs{\mt_{\theta}}$, and using the Schwarz inequality $\abs{\la S\mt_{\theta},\mt_{\theta} \ra} \leq \abs{\mt_{\theta}}\abs{S\mt_{\theta}}$, we see that
\begin{equation}
\label{eq3}
- \abs{S}(2 \alpha \la S\mt_{\theta},\mt_{\theta} \ra) \geq 
-\frac{2}{\sqrt{2}}\abs{\alpha}\abs{S}^{2} \geq -\frac{\abs{\alpha}}{\sqrt{2}}\abs{S}^{3} - \frac{\abs{\alpha}}{\sqrt{2}}\abs{S}.
\end{equation}

Finally, combining \eqref{eq3} with \eqref{inequality1} yields
\begin{equation*}
\Delta \abs{S} \geq -\bigg(1+\frac{\abs{\alpha}}{\sqrt{2}} \bigg)\abs{S}^{3} - \bigg( -2 \min\{0,\k-4\t^{2} \}-2\t^{2}-2H^{2}+ \frac{1}{2}\alpha^{2}+ \frac{\abs{\alpha}}{\sqrt{2}} \bigg) \abs{S},
\end{equation*}which proves \eqref{In1}.
\end{proof}

Any $H-$surface $\Sigma$ in $\hmf$ satisfies a Sobolev type inequality of the form (cf. \cite{HS})
\begin{equation}
\label{SOB}
\norm{f}_{2} \leq C_{0} \norm{\nabla f}_{1} + C_{1} \norm{f}_{1}, \, \, f \in C^{\infty}_{0}(\Sigma), 
\end{equation}where $\norm{f}_{p}$ denotes the $L^{p}(\Sigma)$-norm of $f$ and $C_{0}$, $C_{1}$ are constants that depend only on the mean curvature $H$. Here, $C _{0}^\infty (\Sigma)$, as always, stands for the linear space of compactly supported piecewise smooth functions on $\Sigma$. 

Now, let $p \in \Sigma$ be a fixed point. Consider the intrinsic distance function $d(x,p)$ to $p$ and define the open sets
$$B(R) = \{x \in \Sigma : \, d(p,x) < R \}  \ \ \ and \ \ \ E(R) = \{x \in \Sigma: \, d(x,p) > R \},$$ then, with the above notations, we can show the following: 

\begin{theorem}\label{ZeroInfinity}
Let $\Sigma \subset \hmf$ be a complete $H-$surface, $H^{2}+\tau ^{2} \neq 0$, of finite Abresch-Rosenberg total curvature, that is, 
$$  \int _{\Sigma} \abs{S} ^{2} dv_{g}< +\infty ,$$ then $\abs{S}$ goes to zero uniformly at infinity. More precisely, there exist positive constants $A$, $B$ and a positive radius $R_{\Sigma}$, determined by the condition $B \int_{E(R_{\Sigma})}  \abs{S}^{2} \leq 1$, such that for $u = \abs{S}$ and for all $R \geq R_{\Sigma}$, we have
\begin{equation}
\label{UCZTI}
\|u\|_{\infty,E(2R)} = \sup_{x \in E(2R)}u(x) \leq A \big( \int_{E(R)}|S|^{2} dv_{g} \big)^{\frac{1}{2}}
\end{equation}
and, there exist positive constants $D$ and $E$ such that the inequality $\int_{\Sigma}\abs{S}^{2} dv_{g} \leq D$ implies  
\begin{equation*}
\|u\|_{\infty} = \sup_{x \in \Sigma}u(x) \leq  E \int_{\Sigma}\abs{S}^{2}dv_{g}.
\end{equation*}
\end{theorem}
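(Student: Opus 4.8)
The plan is to prove this by a Moser iteration (an $\varepsilon$-regularity scheme) fed by the differential inequality \eqref{In1} of Proposition \ref{In} and the Sobolev inequality \eqref{SOB}, the crucial input being that finite Abresch--Rosenberg total curvature forces the tail energy $\int_{E(R)}\abs{S}^{2}$ to tend to $0$ as $R\to\infty$. Write $u=\abs{S}\ge 0$; by Proposition \ref{In}, $u$ is a weak subsolution of $-\Delta u\le a\,u^{3}+b\,u$ with $a,b$ depending only on $\kappa-4\t^{2}$ and $H$. The zero set of $S$ is harmless here: Kato's inequality legitimizes all the integrations by parts below in the distributional sense.

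First I would derive a Caccioppoli--type energy estimate. Fix a Lipschitz cutoff $\varphi$ supported in $E(R)$ and, for an exponent $q\ge 1$, test \eqref{In1} against $\varphi^{2}u^{2q-1}$. Integrating by parts and absorbing the cross term by Young's inequality yields
$$\int_{\Sigma}\abs{\n(\varphi u^{q})}^{2}\le c_{q}\int_{\Sigma}a\,u^{2q+2}\varphi^{2}+c_{q}\int_{\Sigma}(b+1)\,u^{2q}\varphi^{2}+c_{q}\int_{\Sigma}u^{2q}\abs{\n\varphi}^{2},$$
with $c_{q}$ growing only polynomially in $q$. Applying \eqref{SOB} to $f=(\varphi u^{q})^{2}$ then converts the left-hand side into control of $\int_{\Sigma}(\varphi u^{q})^{4}$, producing a reverse-Hölder (self-improving) inequality relating the $L^{4}$ and $L^{2}$ norms of $\varphi u^{q}$.

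The heart of the argument, and the step I expect to be the \emph{main obstacle}, is the supercritical cubic term $a\,u^{2q+2}\varphi^{2}$: written as $a\int_{\Sigma}(\varphi u^{q})^{2}u^{2}$ and estimated by Hölder together with \eqref{SOB}, it acquires a factor controlled by the local energy $\int_{\operatorname{supp}\varphi}\abs{S}^{2}$. When this local energy lies below a threshold fixed by the Sobolev constants $C_{0},C_{1}$ and the coefficients $a,b$, the cubic term is absorbed into the left-hand side of the reverse-Hölder inequality. This absorption is exactly what produces the constant $B$ and the radius $R_{\Sigma}$ (determined by $B\int_{E(R_{\Sigma})}\abs{S}^{2}\le 1$), and it is the only place where finiteness of the total curvature is genuinely used. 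I would then iterate the reverse-Hölder inequality over $q_{k}=2^{k}$ and a nested family of cutoffs interpolating between $E(2R)$ and $E(R)$; since the constants combine to a convergent product, letting $k\to\infty$ gives \eqref{UCZTI}, $\Vert u\Vert_{\infty,E(2R)}\le A\big(\int_{E(R)}\abs{S}^{2}\big)^{1/2}$. Because $\int_{E(R)}\abs{S}^{2}\to 0$, this shows $u\to 0$ uniformly at infinity.

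For the second, global estimate I would run the same scheme on all of $\Sigma$ under the smallness hypothesis $\int_{\Sigma}\abs{S}^{2}\le D$, taking $\varphi\equiv 1$ so that the cutoff-gradient term disappears; the uniform decay established above makes the boundary contributions at infinity vanish and lets the iteration close globally. Feeding the finite supremum furnished by \eqref{UCZTI} back into the cubic term of \eqref{In1}, so that it carries a factor of the small total energy, and bootstrapping once more sharpens the dependence to the stated $\Vert u\Vert_{\infty}\le E\int_{\Sigma}\abs{S}^{2}$. Throughout, one must verify that the iteration constants remain bounded on the complete, possibly non-parabolic, surface, which is guaranteed by the dependence of the Sobolev constants in \eqref{SOB} on $H$ alone.
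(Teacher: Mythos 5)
Your proposal is correct and takes essentially the same route as the paper: the paper's entire proof consists of observing that $u=\abs{S}$ satisfies \eqref{In1} and the Sobolev inequality \eqref{SOB}, and then invoking the Moser-iteration argument of B\'erard--do Carmo--Santos \cite[Theorem 4.1]{BCS}, which is exactly the scheme you reconstruct (Caccioppoli estimate, reverse-H\"older inequality via \eqref{SOB}, absorption of the cubic term using smallness of the tail energy to produce $B$ and $R_{\Sigma}$, then iteration over dyadic exponents). The only difference is that the paper delegates all of these details to the citation, whereas you spell them out, correctly identifying the absorption of the supercritical term as the step where finiteness of the Abresch--Rosenberg total curvature is genuinely used.
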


\begin{proof}
Since the function $u=\abs{S}$ satisfies the Sobolev type inequality  \eqref{SOB} and the inequality \eqref{In1}, we can now work as in the proof of \cite[Theorem 4.1]{BCS} to show that $u$ satisfies the inequality \eqref{UCZTI}, letting $R$ go to infinity we get that $\abs{S}$ goes to zero uniformly at infinity.
\end{proof}

Next, we study $H-$surfaces $\Sigma$ in $\hmf$, $H^{2}+\t^{2} \neq 0$, of finite Abresch-Rosenberg total curvature. Despite what happens in $\r ^3$, a $H-$surface $\Sigma \subset \hmf$ with finite Abresch-Rosenberg total curvature is not necessarily conformally equivalent to a compact surface minus a finite number of points, in particular, $\Sigma$ is not necessarily parabolic. For example, the complete $H=1/2$ surface constructed above is hyperbolic. However, we obtain:

\begin{theorem}\label{ThConfEquiv}
Let $\Sigma$ be a complete surface on $\hmf$, $H^{2}+\t^{2} \neq 0$, of finite Abresch-Rosenberg total curvature. Suppose one of the following conditions holds 
\begin{itemize}
\item[1.] $\kappa-4\t^{2}>0$ and $H^{2}+\t^{2} > \frac{\kappa-4\t^{2}}{4} $.
\item[2.] $\kappa-4\t^{2}<0$ and $H^{2}+\t^{2} > -\frac{(\sqrt{5}+2)}{4}(\kappa-4\t^{2}) $.
\end{itemize}

Then, $\Sigma$  must be compact.
\end{theorem}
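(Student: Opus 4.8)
The plan is to argue by contradiction: assume $\Sigma$ is noncompact, and show that the curvature hypotheses force the Gaussian curvature $K$ to be bounded below by a positive constant outside a compact set, which is incompatible with a complete noncompact surface. The finite total curvature hypothesis enters only through the decay of $\abs{S}$ at infinity; the thresholds on $H^{2}+\t^{2}$ enter only to make a certain constant strictly positive.

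First I would invoke Theorem \ref{ZeroInfinity}: since $\Sigma$ has finite Abresch-Rosenberg total curvature, $\abs{S}\to 0$ uniformly at infinity. Because $S$ is traceless one has $\abs{S\mt_{\te}}=\tfrac{1}{\sqrt{2}}\abs{S}\abs{\mt}$, so by Cauchy--Schwarz (equivalently, by the identity \eqref{Eq:Part11}) the sign-indefinite cross term obeys $\abs{\meta{S\mt_{\te}}{\mt_{\te}}}\le \tfrac{1}{\sqrt{2}}\abs{S}\abs{\mt}^{2}\le \tfrac{1}{\sqrt{2}}\abs{S}$. Hence both $\tfrac12\abs{S}^{2}$ and $\alpha\meta{S\mt_{\te}}{\mt_{\te}}$ tend to $0$ uniformly at infinity.

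Next I would extract a uniform lower bound for $K$ from the Gauss equation \eqref{eq2}, namely $K=(\k-4\t^{2})(1-\abs{\mt}^{2})+\t^{2}+H^{2}-\tfrac12\abs{S}^{2}-\alpha\meta{S\mt_{\te}}{\mt_{\te}}-\tfrac{\alpha^{2}}{4}\abs{\mt}^{4}$. Using $\nu^{2}=1-\abs{\mt}^{2}\in[0,1]$ and $\abs{\mt}^{4}\le 1$: when $\k-4\t^{2}>0$ I drop the nonnegative term $(\k-4\t^{2})(1-\abs{\mt}^{2})$, and when $\k-4\t^{2}<0$ I bound it below by $\k-4\t^{2}$. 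Either way this yields $K\ge C-\tfrac12\abs{S}^{2}-\alpha\meta{S\mt_{\te}}{\mt_{\te}}$, with $C=(H^{2}+\t^{2})-\tfrac{\alpha^{2}}{4}$ in case 1 and $C=(H^{2}+\t^{2})-\abs{\k-4\t^{2}}-\tfrac{\alpha^{2}}{4}$ in case 2. Since $\alpha^{2}=\tfrac{(\k-4\t^{2})^{2}}{4(H^{2}+\t^{2})}$, a direct computation shows $C>0$ is equivalent to $H^{2}+\t^{2}>\tfrac{\k-4\t^{2}}{4}$ in case 1, and to $16(H^{2}+\t^{2})^{2}-16(H^{2}+\t^{2})\abs{\k-4\t^{2}}-(\k-4\t^{2})^{2}>0$ in case 2; the latter quadratic in $(H^{2}+\t^{2})/\abs{\k-4\t^{2}}$ has positive root $\tfrac{2+\sqrt{5}}{4}$, so it is exactly $H^{2}+\t^{2}>\tfrac{\sqrt{5}+2}{4}\abs{\k-4\t^{2}}$. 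These are precisely the two stated hypotheses. Combined with the decay of the $\abs{S}$-terms, there is a compact set outside of which $K\ge C/2=:c>0$.

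Finally I would conclude compactness by a Bonnet--Myers argument localized to an end. If $\Sigma$ were complete and noncompact, from a fixed point $p$ there is a minimizing geodesic ray $\gamma$; for large parameter its image lies in the region $E(R)$ where $K\ge c$, so any subsegment of $\gamma$ of length exceeding $\pi/\sqrt{c}$ sits entirely in $\{K\ge c\}$ and, by the second variation (index form) estimate, carries a conjugate point and fails to minimize --- contradicting that $\gamma$ is a ray. Hence $\Sigma$ is compact. I expect the main obstacle to be the third step: producing a genuinely positive constant lower bound for $K$ from \eqref{eq2}, since one must simultaneously absorb the sign-indefinite cross term $\alpha\meta{S\mt_{\te}}{\mt_{\te}}$ through the traceless identity \eqref{Eq:Part11} and choose the $\abs{\mt}$-estimates according to the sign of $\k-4\t^{2}$; the precise arithmetic thresholds, including the appearance of $\sqrt{5}$, are dictated entirely by the requirement $C>0$.
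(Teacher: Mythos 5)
Your proposal is correct and follows essentially the same route as the paper: uniform decay of $\abs{S}$ from Theorem \ref{ZeroInfinity}, the lower bound for $K$ from \eqref{eq2} via the traceless Cauchy--Schwarz estimate $\abs{\meta{S\mt_{\te}}{\mt_{\te}}}\le \abs{S}/\sqrt{2}$, the same case-by-case arithmetic producing the thresholds $\tfrac{\k-4\t^{2}}{4}$ and $\tfrac{\sqrt{5}+2}{4}\abs{\k-4\t^{2}}$, and a Bonnet-type conclusion. Your final step merely spells out, via a minimizing ray and the index form, what the paper compresses into a one-line appeal to Bonnet's Theorem.
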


\begin{proof}
From \eqref{eq2}, the Gaussian curvature can be written as
\begin{equation*}
K = (\kappa-4\t^{2})(1-\abs{\mt}^{2})+\t^{2}+H^{2}-\frac{1}{2}\abs{S}^{2}-\alpha \la S\mt_{\theta},\mt_{\theta} \ra - \alpha^{2}\frac{\abs{\mt}^{4}}{4}.
\end{equation*}

Now, $\abs{\mt_{\theta}} \leq 1$, $S$ traceless and the Schwarz inequality imply $$-\alpha \la S\mt_{\theta},\mt_{\theta} \ra \geq -\abs{\alpha}\frac{\abs{S}\abs{\mt_{\theta}}}{\sqrt{2}} \geq -\frac{\abs{\alpha}\abs{S}}{\sqrt{2}} ,$$therefore
\begin{equation*}
K \geq (\kappa-4\t^{2})\nu^{2} + (H^{2}+\t^{2}) -\frac{1}{2}\abs{S}^{2} -\abs{\alpha}\frac{\abs{S}}{\sqrt{2}} - \alpha^{2}\frac{\abs{\mt}^{4}}{4}.
\end{equation*}

If $\kappa-4\t^{2} > 0$, then 
$$K \geq  (H^{2}+\t^{2})  - \frac{\alpha^{2}}{4} -\frac{1}{2}\abs{S}^{2} -\alpha\frac{\abs{S}}{\sqrt{2}}.$$

If $\kappa-4\t^{2}<0$, then
$$K \geq  (\kappa-4\t^{2})+(H^{2}+\t^{2})  - \frac{\alpha^{2}}{4} -\frac{1}{2}\abs{S}^{2} + \alpha\frac{\abs{S}}{\sqrt{2}}.$$

In both cases, the hypothesis and the fact that $\abs{S}$ goes to zero uniformly imply that there exists a compact set $\overline{\Omega}$ and $\epsilon >0$ (depending on the compact set) such that the Gaussian curvature satisfies 
$$K  (p)\geq \epsilon >0 \text{ for all } p \in \Sigma \setminus \overline{\Omega} .$$

Therefore, Bonnet Theorem implies that $d(p,\partial \Sigma \setminus \overline{\Omega})$ is uniformly bounded for all $p \in \Sigma \setminus \overline{\Omega}$. Thus, $\Sigma$ must be compact. 

\end{proof}

\begin{remark}
The above result was obtained in \cite{BCF} when $\t =0$. We could have extended other results from \cite{BCF}, but we left this to the interested reader.
\end{remark}

\subsection{First Eigenvalue of a Schr\"{o}dinger operators}

We will use the Simons' type formula \eqref{lapla2} to estimate the first eigenvalue $\lambda_{1}(L)$ of a Schr\"{o}dinger operator $L$ defined on a complete $H-$surface $\Sigma$ in $\hmf$ of finite Abresch-Rosenberg total curvature.  

Set $V \in C^0 (\Sigma) $ and consider the differential linear operator, called {\bf Schr\"{o}dinger operator}, given by 
$$ \begin{matrix} L : & C _{0}^\infty (\Sigma) &\to & C _{0}^\infty (\Sigma) \\
  & f & \to & Lf := \Delta f + V \, f ,\end{matrix}$$where $\Delta $ is the Laplacian with respect to the induced Riemannian metric on $\Sigma$.
  
Given a relatively compact domain $\Omega \subset \Sigma$, it is well-known (cf. \cite{ICha84,DGilNTru83}), that there exists a positive function $\rho : \Sigma \to \r$ such that 
\begin{equation*}\label{FEF}
\left\{ \begin{matrix}
-\Delta\rho  &=& (V + \lambda_{1}(L ,\Omega)) \rho  & \text{ in } & \Omega \\
 \rho & =&  0   & \text{ on } & \partial \Omega,
 \end{matrix}\right.
\end{equation*}where 
$$ \lambda _1 (L, \Omega)= {\rm inf}\set{\frac{\int _\Omega (\norm{\nabla f}^2 - V f^2)dv_g}{\int _\Omega f^2 \, dv_g} \, : \, \, f \in C_{0}^\infty (\Omega)} ,$$that is, $\lambda _1 (L,\Omega)$ and $\rho$ are the {\bf first eigenvalue} and {\bf first eigenfunction}, respectively, associated to the Schr\"{o}dinger operator $L$ on $\Omega \subset \Sigma$.

Now, we can consider the infimum over all the relatively compact domains in $\Sigma$ and we can define the infimum of the spectrum of $L$ as 
$$ \lambda _{1} (L) := {\rm inf}\set{ \lambda _{1}(L,\Omega) \, : \,\, \Omega \subset \Sigma \text{ relatively compact} } ,$$in particular, 
$$  \displaystyle{\lambda _{1} (L)  : = {\rm \inf}_{i \to +\infty} \lambda _{1} (L, \Omega _{i})},$$for any compact exhaustion $\set{\Omega _{i}} $ of $\Sigma$. 

\begin{remark}
It is standard that the regularity conditions above can be relaxed, but this is not important in our arguments. 
\end{remark}

First, we will relate the Simons' type formula to the first eigenvalue of any Schr\"{o}dinger operator. 

\begin{lema}\label{Funtes}
Let $\Sigma \subset \hmf $ be a complete $H-$surface, $H^{2}+\tau ^{2} \neq 0$, and $\Omega \subset \Sigma$ a relatively compact domain. Denote by $ \lambda _{1}(L,\Omega) $ and $\rho_{\Omega} $ the first eigenvalue and first eigenfunction, respectively, associated to the Schr\"{o}dinger operator $L := \Delta + V$ on $\Omega$, $V\in C^0 (\Omega)$. Set  
$$C_{\Omega} = |S|(V + \lambda_{1}(L,\Omega)) + \Delta |S| ,$$where $S$ is the traceless Abresch-Rosenberg shape operator. Given $\phi \in C^{\infty}_{0} (\Omega ')$, $\overline{\Omega '} \subset \Omega$, then 
\begin{equation}\label{Eq:Complete}
\int _{\Omega} \phi ^{2} \abs{S} C_{\Omega} \leq \int _{\Omega} \abs{S}^{2}\norm{\nabla \phi}^{2} .
\end{equation}
\end{lema}
\begin{proof}
Set $\rho_{\Omega} = \rho$ and $\lambda_{1}(L,\Omega) = \lambda_{1}$. By the Maximum Principle $\rho >0$ in $\Omega$. Set $w := \ln \rho $ in $\Omega '$ and note that it is well defined. Moreover, it holds
$$ \Delta w = -\left( V+\lambda _{1}\right) -\abs{\nabla w}^{2} .$$ 

Set $\psi = \phi \abs{S}$. On the one hand, by Stokes' Theorem 
\begin{equation*}
\begin{split}
 0 & =  \int _{\Omega}{\rm div}\left( \psi ^{2}\nabla w\right) \\
  & = \int _{\Omega}\psi ^{2} \Delta w + \int_{\Omega} 2  \psi \meta{\nabla \psi}{\nabla w} \\
  & = -\int _{\Omega}\psi ^{2}\left( V+\lambda _{1}\right) -\int _{\Omega} \psi ^{2}\abs{\nabla w}^{2} + \int_{\Omega} 2 \psi \meta{\nabla \psi}{\nabla w}  \\
  & \leq -\int _{\Omega}\psi ^{2}\left( V+\lambda _{1}\right) + \int _{\Omega} \abs{\nabla \psi}^{2},
\end{split}
\end{equation*}where we have used $ -\psi ^{2}\abs{\nabla w}^{2} + 2 \psi \meta{\nabla \psi}{\nabla w} \leq \abs{\nabla \psi}^{2}$. In other words, we have 
\begin{equation}
\label{inequal}
 \int _{\Omega}\psi ^{2}\left( V+\lambda _{1}\right) \leq  \int _{\Omega} \abs{\nabla \psi}^{2}.
\end{equation}

On the other hand, using the definition of $\psi$ we get
$$\abs{\nabla \psi} ^{2} = \phi ^{2} \abs{\nabla \abs{S}}^{2} +2 \phi \abs{S} \meta{\nabla \phi}{\nabla \abs{S}} + \abs{S}^{2}\abs{\nabla \phi}^{2} ,$$and, since 
$$ \frac{1}{2} {\rm div}\left( \phi ^{2} \nabla \abs{S}^{2}\right) = \phi ^{2}\abs{S}\Delta \abs{S} +\phi ^{2} \abs{\nabla \abs{S}}^{2} +2 \phi \abs{S} \meta{\nabla \phi}{\nabla \abs{S}} ,$$we have that
$$ \abs{\nabla \psi} ^{2} = \frac{1}{2} {\rm div}\left( \phi ^{2} \nabla \abs{S}^{2}\right) -\phi ^{2}\abs{S}\Delta \abs{S} + \abs{S}^{2}\abs{\nabla \phi}^{2}, $$so, taking integrals and using Stokes' Theorem, we obtain
\begin{equation}
\label{equal}
 \int _{\Omega} \abs{\nabla \psi}^{2} = -\int _{\Omega} \phi ^{2}\abs{S}\Delta \abs{S}+ \int_{\Omega} \abs{S}^{2}\abs{\nabla \phi}^{2}.
\end{equation}

Thus, combining \eqref{equal} with \eqref{inequal} we get \eqref{Eq:Complete}.
\end{proof}

Now, we can use Lemma \ref{Funtes} to estimate the first eigenvalue $\lambda_{1}(L)$ of the Schr\"{o}dinger operator $L = \Delta + V$. Specifically,

\begin{theorem}\label{Thm:Estimate1}
Let $\Sigma$ be a complete two-sided $H-$surface in $\hmf$ of finite Abresch-Rosenberg total curvature and $H^{2} + \t^{2} \neq 0$. Denote by $\lambda_{1}(L)$ the first eigenvalue associated to the Schr\"{o}dinger operator $L := \Delta + V$, $V\in C^0 (\Sigma)$. Then, $\Sigma$ is either an Abresch-Rosenberg surface, a Hopf cylinder or 
\begin{equation}
\label{Esl}
\lambda _{1}(L) < - {\rm inf}_{\Sigma}\set{V+2K}.
\end{equation}
\end{theorem}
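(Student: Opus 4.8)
The plan is to prove the trichotomy by contradiction. Suppose $\Sigma$ is \emph{neither} an Abresch-Rosenberg surface \emph{nor} a Hopf cylinder, and suppose the strict inequality \eqref{Esl} fails, i.e. $\lambda_1(L)\ge -\inf_{\Sigma}\set{V+2K}$. Writing $\mu:=\inf_{\Sigma}\set{V+2K}$, this means $V+2K\ge\mu$ pointwise while $\lambda_1(L)\ge -\mu$, so that $V+2K+\lambda_1(L)\ge 0$ everywhere on $\Sigma$. (Two-sidedness is used only to guarantee that the global unit normal, hence the traceless Abresch-Rosenberg shape operator $S$, is well defined on all of $\Sigma$.) I will show these hypotheses force $\abs{S}$ to be a positive constant, which by Theorem \ref{Lem:qconstant} makes $\Sigma$ a Hopf cylinder, a contradiction.

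The first step is to feed the Simons' type formula \eqref{lapla2} into the quantity $C_{\Omega}$ of Lemma \ref{Funtes}. Away from the zeroes of $\abs{S}$ we have $\abs{S}\,\Delta\abs{S}=2K\abs{S}^{2}+\abs{\nabla\abs{S}}^{2}$, so that $\abs{S}\,C_{\Omega}=\abs{S}^{2}(V+2K+\lambda_{1}(L,\Omega))+\abs{\nabla\abs{S}}^{2}$, and \eqref{Eq:Complete} becomes, for any relatively compact $\Omega$ and $\phi\in C^{\infty}_{0}(\Omega')$ with $\overline{\Omega'}\subset\Omega$,
\[
\int_{\Omega}\phi^{2}\Big(\abs{S}^{2}(V+2K+\lambda_{1}(L,\Omega))+\abs{\nabla\abs{S}}^{2}\Big)\le \int_{\Omega}\abs{S}^{2}\abs{\nabla\phi}^{2}.
\]
Since $\lambda_{1}(L)$ is by definition the infimum of the $\lambda_{1}(L,\Omega)$, we have $\lambda_{1}(L,\Omega)\ge\lambda_{1}(L)\ge-\mu$, so the bracketed integrand is bounded below by the nonnegative quantity $\abs{S}^{2}(V+2K+\lambda_{1}(L))+\abs{\nabla\abs{S}}^{2}$.

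The key analytic step is to annihilate the right-hand side using finite Abresch-Rosenberg total curvature. Fix $p\in\Sigma$ and take the radial cutoff $\phi_{R}$ equal to $1$ on $B(R)$, vanishing outside $B(2R)$, and interpolating linearly in the annulus, so $\abs{\nabla\phi_{R}}\le 1/R$; completeness makes $\overline{B(2R)}$ compact, so $\phi_{R}$ is admissible once $\Omega$ is chosen relatively compact with $\overline{B(2R)}\subset\Omega$. Then $\int_{\Omega}\abs{S}^{2}\abs{\nabla\phi_{R}}^{2}\le R^{-2}\int_{\Sigma}\abs{S}^{2}\to 0$. Because the integrand is nonnegative, restricting the left-hand side to $B(R)$ (where $\phi_{R}\equiv 1$) and letting $R\to\infty$, monotone convergence gives
\[
\int_{\Sigma}\Big(\abs{S}^{2}(V+2K+\lambda_{1}(L))+\abs{\nabla\abs{S}}^{2}\Big)\le 0.
\]
Each summand being nonnegative, both vanish identically; in particular $\abs{\nabla\abs{S}}\equiv 0$. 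Since the zeroes of $\abs{S}^{2}=2q^{AR}$ are the (isolated) zeroes of the holomorphic Abresch-Rosenberg differential (Theorem \ref{thQH}), their complement is connected, so $\abs{S}$ is constant on $\Sigma$.

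Finally I would invoke rigidity: as $\Sigma$ is not an Abresch-Rosenberg surface this constant is positive, hence $q^{AR}=\tfrac12\abs{S}^{2}$ is a positive constant and Theorem \ref{Lem:qconstant} forces $\Sigma$ to be a Hopf cylinder, contradicting our assumption; thus \eqref{Esl} must hold. I expect the main obstacle to be the careful bookkeeping in combining Lemma \ref{Funtes} with \eqref{lapla2}: one must retain the gradient term $\abs{\nabla\abs{S}}^{2}$ with a favorable sign and keep clear the comparison between $\lambda_{1}(L,\Omega)$ and the spectral infimum $\lambda_{1}(L)$, because it is precisely the surviving gradient term (not merely the eigenvalue inequality) that delivers the rigidity $\abs{S}\equiv\text{const}$ needed to detect the Hopf cylinder.
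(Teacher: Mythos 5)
Your proof is correct and takes essentially the same route as the paper's: both argue by contradiction, combine Lemma \ref{Funtes} with the Simons' type formula \eqref{lapla2}, use a radial cutoff together with the finite Abresch-Rosenberg total curvature hypothesis to make the right-hand side of \eqref{Eq:Complete} vanish in the limit, and conclude via the dichotomy that a constant $\abs{S}$ forces either an Abresch-Rosenberg surface or (through Theorem \ref{Lem:qconstant}) a Hopf cylinder. Your explicit retention of the gradient term $\abs{\nabla\abs{S}}^{2}$ and your remark on the isolated zeros of the holomorphic differential merely spell out what the paper leaves implicit in its step that $C\equiv 0$ forces $\abs{S}$ to be constant.
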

\begin{proof}
Assume that $\abs{S}$ is not identically constant on $\Sigma$, otherwise $\Sigma$ is either an Abresch-Rosenberg $H-$surface or a Hopf cylinder. Then, from \eqref{lapla2}, we get
\begin{equation*}
C  =  \abs{S} (V + \lambda_{1}(L))+ \Delta |S| \geq   \abs{S}(V + \lambda_{1}(L)+ 2K).
\end{equation*}

Note that $C >  \abs{S}(V + \lambda_{1}(L) + 2 K)  $ at some point since, otherwise, it would implies  that $|S|$ is constant, which is a contradiction.

Suppose $\lambda_{1}(L) \geq -\inf_{\Sigma}\{V+2K\}$,  then $C \geq \abs{S}(V + \lambda_{1}(L) + 2 K) \geq 0$. Now, take $p \in \Sigma$ a fixed point and $R>0$. Denote by $r(x) = d(x,p)$ the distance function from $p$ and $B(p,R)$ the geodesic ball of radius $R$. Choose $R' < R$ and define $\phi$ as follows 

\begin{equation*}
\phi(x) =
\begin{cases}
 1 & \text{For $x$ such that $0 \leq r(x) \leq \frac{R'}{2}$. }\\
2 - \frac{2}{R'}r(x) & \text{For $x$ such that $\frac{R'}{2} < r(x) \leq R'$.}\\
0 & \text{For $x$ such that $R' < r(x) \leq R$.}\end{cases}
\end{equation*}

Observes that $\phi \in C^{1}_{0}(B(p,R'))$, $\overline{B(p,R')} \subset B(p,R)$ and $B(p,R)$ is a relatively compact set on $\Sigma$, then Lemma \ref{Funtes} implies 

\begin{equation*}
\int _{B(p,R)} \phi ^{2} \abs{S} C_{B(p,R)}\leq \int _{B(p,R)} \abs{S}^{2}\norm{\nabla \phi}^{2},
\end{equation*}and, since $\phi = 1$ on $B(p, \frac{R'}{2})$ and $C \geq 0$ on $\Sigma$, we get
\begin{equation*}
\int_{B(p,\frac{R'}{2})} \abs{S}C_{B(p,R)} \leq \int _{B(p,R)} \phi ^{2} \abs{S} C_{B(p,R)} \leq \frac{4}{(R')^{2}}\displaystyle{\int_{\{x \in \Sigma: \frac{R'}{2} < r(x) \leq R' \}} \abs{S}^{2}}. 
\end{equation*}

Hence, from the hypothesis that $\Sigma$ has finite Abresch-Rosenberg total curvature  and letting $R' \rightarrow \infty$ we obtain
$$\int_{\Sigma}\abs{S}C \leq 0.$$ 

The above equation implies that $C \equiv 0$ on $\Sigma$, hence $\abs{S}$ must be constant on $\Sigma$, which is a contradiction since we are assuming that $\abs{S}$ is not constant. Therefore $\lambda _{1}(L)$ must satisfy 
$$ \lambda _{1}(L) < - {\rm inf}_{\Sigma}\set{V+2K}.$$ 
\end{proof}


Obviously,  any closed $H-$surface in $\hmf$ has finite Abresch-Rosenberg total curvature. Consequently, from  \cite[Theorem 1]{To2} and Theorem \ref{Thm:Estimate1} we get an estimate of $\lambda_{1}(L)$ of any closed surface of $\hmf$, $H^{2}+\t^{2}\neq 0$. 

\begin{corollary}\label{Cor:Estimate1}
Let $\Sigma$ be a closed $H-$surface in $\hmf$, $H^{2}+\t^{2} \neq 0$. Denote by $\lambda_{1}(L)$ the first eigenvalue associated to the Schr\"{o}dinger Operator  $L := \Delta + V$, $V\in C^0 (\Sigma)$. Then, $\Sigma$ is either a rotationally symmetric $H-$sphere, a Hopf $H-$tori or 
\begin{equation}
\label{Esl}
\lambda _{1}(L) < - {\rm inf}_{\Sigma}\set{V+2K}.
\end{equation}
\end{corollary}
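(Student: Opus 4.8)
The plan is to realize this statement as a direct specialization of Theorem \ref{Thm:Estimate1}, the only extra work being to identify what the two exceptional families---Abresch-Rosenberg surfaces and Hopf cylinders---reduce to once $\Sigma$ is assumed closed. First I would observe that the hypothesis of finite Abresch-Rosenberg total curvature is automatic here: since $\Sigma$ is compact and $\abs{S}$ is continuous, $\int_{\Sigma} \abs{S}^{2}\, dv_{g} < +\infty$ trivially, so $\Sigma$ satisfies all the assumptions of Theorem \ref{Thm:Estimate1}. Applying that theorem gives the trichotomy: either $\Sigma$ is an Abresch-Rosenberg surface, or a Hopf cylinder, or $\lambda_{1}(L) < -\inf_{\Sigma}\set{V+2K}$, which is precisely the desired inequality. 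Thus the whole content of the proof is to translate the first two alternatives into the language of the corollary.

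Suppose first that $\Sigma$ is a closed Abresch-Rosenberg surface, i.e. $q^{AR}\equiv 0$ (Definition \ref{Def:ARSurface}). By Lemma \ref{QVa}, $\Sigma$ is invariant under a one-parameter subgroup $\set{\phi_{t}}$ of isometries of $\hmf$, so the restrictions $\phi_{t}|_{\Sigma}$ generate a Killing field on the compact Riemannian surface $(\Sigma, I)$. Since a nontrivial Killing field on a closed orientable surface either is zero-free (forcing $\chi(\Sigma)=0$, a torus) or has isolated rotational zeros of index $+1$ (forcing $\chi(\Sigma)=2$, a sphere), $\Sigma$ is either a sphere or a torus. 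In the spherical case the Abresch-Rosenberg classification of constant mean curvature spheres forces $\Sigma$ to be a rotationally symmetric $H$-sphere, while the toral case is absorbed into the Hopf torus alternative treated next.

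For the remaining alternative, suppose $\Sigma$ is a closed Hopf cylinder $\Sigma_{\gamma}=\pi^{-1}(\gamma)$. Compactness of $\Sigma_{\gamma}$ forces $\gamma$ to be a closed curve in $\M^{2}(\k)$ and, simultaneously, the fibers of $\pi$ to be compact, i.e. $\pi$ to be a circle Riemannian submersion. By \cite[Theorem 1]{To2} this happens exactly when $\hmf$ is a Berger sphere ($\k>0$, $\t\neq 0$), and in that situation $\Sigma_{\gamma}$ is a Hopf $H$-torus over the closed curve $\gamma$ of geodesic curvature $2H$. Collecting the three cases yields the three stated alternatives.

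The step I expect to be the main obstacle is the geometric identification in the second paragraph: making precise that the only closed Abresch-Rosenberg surfaces are rotationally symmetric $H$-spheres (and, degenerately, Hopf tori), ruling out any higher-genus closed invariant surface. This is where the interplay between Lemma \ref{QVa}, the topological constraint that a closed surface carrying a nontrivial Killing field must have nonnegative Euler characteristic, and the appeal to \cite[Theorem 1]{To2} for the compact-fiber case does the real work; by contrast the eigenvalue inequality itself requires nothing beyond quoting Theorem \ref{Thm:Estimate1}.
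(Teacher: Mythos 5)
Your overall strategy is exactly the paper's: the paper's entire proof is the observation that a closed surface trivially has finite Abresch--Rosenberg total curvature, followed by an appeal to Theorem \ref{Thm:Estimate1} and \cite[Theorem 1]{To2}. Your first and third paragraphs (finiteness for free; closed Hopf cylinder $\Rightarrow$ compact fibers $\Rightarrow$ Berger sphere $\Rightarrow$ Hopf torus, via \cite[Theorem 1]{To2}) are correct and faithful to that. The genuine gap is in your second paragraph, at exactly the step you yourself flag as the crux: the claim that the genus-one case of a closed Abresch--Rosenberg surface ``is absorbed into the Hopf torus alternative'' is false, because a Hopf torus is \emph{never} an Abresch--Rosenberg surface. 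Indeed, on a Hopf cylinder $\nu \equiv 0$, so \eqref{DerNuZ} gives $Q = -\lambda(H-i\t)\nt/\overline{\nt}$, and \eqref{normZ} gives $\nt\,\overline{\nt} = \lambda/2$; substituting into Definition \ref{DefAR} yields
\begin{equation*}
Q^{AR} = -(4H^{2}+\k)\,\nt^{2}, \qquad q^{AR} = \frac{(4H^{2}+\k)^{2}}{16},
\end{equation*}
which is a strictly positive constant in a Berger sphere ($\k>0$). This is consistent with the paper's own dichotomy (Theorem \ref{Lem:qconstant} and the proof of Theorem \ref{Thm:Estimate1}): Hopf cylinders are precisely the surfaces with $\abs{S}$ a positive constant, while Abresch--Rosenberg surfaces are those with $\abs{S}\equiv 0$; the two exceptional families are disjoint, so a genus-one Abresch--Rosenberg surface, if it existed, could not be a Hopf torus.

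What your Killing-field/Poincar\'{e}--Hopf reduction actually leaves open, then, is the possibility of a closed Abresch--Rosenberg surface of genus one, which would be invariant under a rotational or screw-motion subgroup (not the vertical flow) and would falsify the trichotomy as you have argued it. Closing this requires the additional input that no compact Abresch--Rosenberg surface of positive genus exists, i.e., that among the invariant surfaces produced by Lemma \ref{QVa} the only compact ones are the rotationally invariant spheres; this comes from the explicit classification of complete surfaces with vanishing Abresch--Rosenberg differential in \cite{Rosen1,ER2} (see also \cite{To2}), whose non-spherical members are all non-compact, and it is not delivered by the index argument alone. (A smaller point: you should also note that the induced Killing field is not identically zero on $\Sigma$ --- otherwise $\Sigma$ would lie in the fixed-point set of the group, hence be totally geodesic, which is impossible when $H^{2}+\t^{2}\neq 0$.) With that classification cited, your argument closes up and coincides with the paper's; without it, the step fails.
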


\subsection{Stability Operator}

Now, we obtain estimates for the most natural Schr\"{o}dinger operator of a complete $H-$surface in $\hmf$, the Stability (or Jacobi) operator 
$$ J = \Delta + (\abs{A}^{2} + {\rm Ric} (N)) ,$$where ${\rm Ric}(N)$ is the Ricci curvature of the ambient manifold in the normal direction. Hence, in this case, $V \equiv \abs{A}^{2} + {\rm Ric} (N)$.

Note that, since ${\rm Ric}( N) = (\kappa - 4\t^{2})|\mt|^{2} + 2\t^{2}$ and the Gauss equation \eqref{Gauss}, we have 
\begin{equation*}
\begin{split}
V+2K &= 4H^{2} +2(K-K_{e}) + (\kappa - 4\t^{2})|\mt|^{2}  +2 \t ^{2}\\
 &= 4 H^{2} + \kappa + (\kappa - 4\t^{2})\nu ^{2} .
\end{split}
\end{equation*}

Hence, Theorem \ref{Thm:Estimate1} and the above equality gives:
\begin{theorem}\label{Stability}
Let $\Sigma$ be a complete  two sided $H-$surface of finite Abresch-Rosenberg total curvature  in $\hmf$, $H^{2}+\t^{2} \neq 0$. 
\begin{itemize}
\item If $\kappa - 4\t ^{2} > 0$. Then, $\Sigma $ is either an Abresch-Rosenberg $H-$surface, a Hopf cylinder, or
\begin{equation*}
\lambda_{1} < - (4H^{2} + \kappa ).
\end{equation*}
\item If $\k-4\t^{2} < 0$. Then, $\Sigma$ is either an Abresch-Rosenberg $H-$surface, or 
\begin{equation*} 
\lambda_{1} <  - (4H^{2} + \kappa ) - (\kappa-4\t^{2}).
\end{equation*}
\end{itemize}
\end{theorem}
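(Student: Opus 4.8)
The plan is to specialize Theorem~\ref{Thm:Estimate1} to the Jacobi operator $J=\Delta+V$ with $V=\abs{A}^{2}+{\rm Ric}(N)$, so that the entire argument reduces to bounding ${\rm inf}_{\Sigma}\set{V+2K}$ from below in terms of $4H^{2}+\k$. The single ingredient I need is the identity displayed immediately above the statement,
$$
V+2K = 4H^{2}+\k+(\k-4\t^{2})\nu^{2},
$$
which follows at once from $\abs{A}^{2}=4H^{2}-2K_{e}$, the Gauss equation~\eqref{Gauss}, the formula ${\rm Ric}(N)=(\k-4\t^{2})\abs{\mt}^{2}+2\t^{2}$, and $\abs{\mt}^{2}+\nu^{2}=1$ from~\eqref{Norm}. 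Once this is in place, everything is a sign discussion governed by $\k-4\t^{2}$ together with the pointwise range $0\le\nu^{2}\le 1$.

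First I would treat $\k-4\t^{2}>0$. Here $(\k-4\t^{2})\nu^{2}\ge 0$, so $V+2K\ge 4H^{2}+\k$ pointwise and hence ${\rm inf}_{\Sigma}\set{V+2K}\ge 4H^{2}+\k$. Plugging this into Theorem~\ref{Thm:Estimate1} gives, away from the two rigid cases, $\lambda_{1}< -{\rm inf}_{\Sigma}\set{V+2K}\le -(4H^{2}+\k)$, which is the claimed bound. Both exceptional configurations must be retained in this item: on a Hopf cylinder $\nu\equiv 0$ forces $V+2K\equiv 4H^{2}+\k$, and a direct computation shows one only recovers the equality $\lambda_{1}=-(4H^{2}+\k)$, not the strict inequality, so the Hopf cylinder is a genuine boundary case here.

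Next, for $\k-4\t^{2}<0$, I would instead use $\nu^{2}\le 1$, which now gives $(\k-4\t^{2})\nu^{2}\ge \k-4\t^{2}$ and hence $V+2K\ge 4H^{2}+\k+(\k-4\t^{2})$, so that ${\rm inf}_{\Sigma}\set{V+2K}\ge 4H^{2}+\k+(\k-4\t^{2})$. Theorem~\ref{Thm:Estimate1} then yields $\lambda_{1}<-(4H^{2}+\k)-(\k-4\t^{2})$ outside the rigid cases.

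The one genuinely delicate point — and the main obstacle — is explaining the asymmetry, i.e. why the Hopf cylinder may be removed from the list of exceptions precisely when $\k-4\t^{2}<0$. The hard part will be to check directly that in this regime a Hopf cylinder already satisfies the stated strict inequality. Under the finite Abresch-Rosenberg total curvature hypothesis a Hopf cylinder with $\abs{S}\equiv 0$ is an Abresch-Rosenberg surface and thus already listed; the remaining Hopf cylinders carry $\abs{S}$ equal to a \emph{positive} constant, and since they are flat, $\int_{\Sigma}\abs{S}^{2}=\abs{S}^{2}\,{\rm Area}(\Sigma)<+\infty$ forces finite area, hence compactness (a Hopf torus). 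On such a torus $V\equiv 4H^{2}+\k$ is constant and $\lambda_{1}(-\Delta)=0$ via constant test functions, so $\lambda_{1}(J)=-(4H^{2}+\k)$. Because $\k-4\t^{2}<0$, this value is strictly below $-(4H^{2}+\k)-(\k-4\t^{2})$, so the Hopf cylinder is absorbed into the inequality and need not be stated separately — exactly the difference between the two items. This completes the proof.
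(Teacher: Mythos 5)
Your proposal is correct and takes essentially the same route as the paper: the authors derive the identical identity $V+2K = 4H^{2}+\kappa+(\kappa-4\tau^{2})\nu^{2}$ and then simply invoke Theorem~\ref{Thm:Estimate1}, bounding $(\kappa-4\tau^{2})\nu^{2}$ below by $0$ or by $\kappa-4\tau^{2}$ according to the sign, exactly as you do. Your closing argument for the asymmetry — that under the finite Abresch-Rosenberg total curvature hypothesis a Hopf cylinder with $\abs{S}\equiv c>0$ has finite area, hence is a flat Hopf torus with $\lambda_{1}(J)=-(4H^{2}+\kappa)$, which violates strict inequality precisely when $\kappa-4\tau^{2}>0$ but is absorbed by it when $\kappa-4\tau^{2}<0$ — is correct and supplies the one detail the paper leaves entirely implicit when it drops the Hopf cylinder from the second case.
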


\begin{remark}
These estimates were obtained by Al\'{i}as-Mero\~{n}o-Ort\'{i}z \cite{AMO} for closed surfaces in $\hmf$. 
\end{remark}

\section{Pinching Theorems for $H-$surfaces in $\hmf$}

In this section we use the Simons' type formula \eqref{lapla} together with the Omori-Yau Maximum Principle to classify complete $H-$surfaces in $\hmf$ satisfying a pinching condition on its Abresch-Rosenberg fundamental form. First, we recall the Omori-Yau Maximum Principle for the reader convenience. 

\begin{theorem}[\cite{Y}]
Let $M$ be a complete Riemannian manifold with Ricci curvature bounded from below. If $u \in C^{\infty}(M)$ is bounded from above, then there exits a sequence of points $\{p_{j}\}_{j \in \mathbb{N}} \in M$ such that:
\begin{enumerate}
\item[1.] $\displaystyle{\lim_{j \rightarrow \infty} u(p_{j}) = \sup_{M}u}$.
\item[2.] $|\nabla u|(p_{j}) < \frac{1}{j}$.
\item[3.] $\Delta u(p_{j}) < \frac{1}{j} $.
\end{enumerate}
\end{theorem}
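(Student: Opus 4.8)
The plan is to prove the principle by the classical perturbation argument of Omori and Yau, reducing everything to the construction of a suitable exhaustion function with controlled first and second order behaviour. Write $u^* = \sup_M u$, which is finite by hypothesis. If $u^*$ is attained at an interior point the conclusion is immediate, since there the gradient vanishes and the Laplacian is nonpositive, so I would concentrate on the genuine case in which $u^*$ is only approached at infinity.

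First I would fix a reference point $o \in M$ and set $r(x) = d(x,o)$. The hypothesis ${\rm Ric} \geq -(n-1)\kappa^{2}$ feeds into the Laplacian comparison theorem, which bounds $\Delta r$ from above by $(n-1)\kappa\coth(\kappa r)$ away from $o$ and the cut locus, hence by a constant for $r$ bounded away from $0$. From this I would build a smooth function $\gamma \geq 0$ with $\gamma(x) \to +\infty$ as $x \to \infty$ and satisfying both $\norm{\nabla \gamma} \leq C$ and $\Delta \gamma \leq C$ (in the barrier sense) for a constant $C$ independent of the point; the choice $\gamma = \sqrt{1 + r^{2}}$ works, since its gradient has norm $r/\sqrt{1+r^{2}} \leq 1$ and its Laplacian inherits the comparison bound on $\Delta r$.

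Next, for each $\varepsilon > 0$ I would consider the perturbed function $u_{\varepsilon} = u - \varepsilon \gamma$. Since $u$ is bounded above and $\gamma \to +\infty$, the function $u_{\varepsilon}$ tends to $-\infty$ at infinity and therefore attains an interior maximum at some point $p_{\varepsilon}$. Writing the first and second order conditions at $p_{\varepsilon}$ gives $\nabla u(p_{\varepsilon}) = \varepsilon \nabla \gamma(p_{\varepsilon})$ and $\Delta u(p_{\varepsilon}) \leq \varepsilon \Delta \gamma(p_{\varepsilon})$, hence $\norm{\nabla u}(p_{\varepsilon}) \leq C\varepsilon$ and $\Delta u(p_{\varepsilon}) \leq C\varepsilon$. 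A short comparison, using $\gamma \geq 0$ and testing $\sup_{M}(u - \varepsilon \gamma)$ against any fixed point, shows that $\sup_{M} u_{\varepsilon} \to u^*$ and consequently $u(p_{\varepsilon}) \to u^*$ as $\varepsilon \to 0$. Choosing $\varepsilon = \varepsilon_{j} \to 0$ with $C\varepsilon_{j} < \frac{1}{j}$ and setting $p_{j} = p_{\varepsilon_{j}}$ then produces a sequence satisfying the three required properties.

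The main obstacle is twofold: producing the comparison bound on $\Delta r$, where the Ricci lower bound is essential, and handling the fact that the maximum point $p_{\varepsilon}$ may lie on the cut locus of $o$, where $r$ and hence $\gamma$ fail to be smooth. I would resolve the latter by Calabi's trick, replacing $r$ near $p_{\varepsilon}$ by a smooth upper support function built from a minimizing geodesic; since only an upper bound on $\Delta u(p_{\varepsilon})$ is needed, the comparison inequality survives in the barrier sense and the argument goes through unchanged.
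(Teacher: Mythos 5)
This theorem is not proved in the paper at all: it is quoted from Yau \cite{Y} for the reader's convenience, so there is no internal argument to compare yours against. Your proposal is the standard proof of the Omori--Yau principle and, in outline, it is correct. The Ricci lower bound enters exactly where you say, through the Laplacian comparison theorem applied to $r=d(\cdot,o)$; the function $\gamma=\sqrt{1+r^{2}}$ then satisfies $|\nabla\gamma|\leq 1$ and $\Delta\gamma\leq C$ away from the cut locus; the maximum point $p_{\varepsilon}$ of $u-\varepsilon\gamma$ exists because, by Hopf--Rinow, $\gamma$ is an exhaustion, and the first- and second-order conditions at $p_{\varepsilon}$ give items 2 and 3 once $\varepsilon_{j}$ is chosen with $\max\{1,C\}\,\varepsilon_{j}<1/j$ (note the statement asks for strict inequalities). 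Your justification of item 1 is also the right one: $u(p_{\varepsilon})\geq \sup_{M}(u-\varepsilon\gamma)\geq u(x)-\varepsilon\gamma(x)$ for each fixed $x$, so $\liminf_{\varepsilon\to 0}u(p_{\varepsilon})\geq \sup_{M}u$, while trivially $u(p_{\varepsilon})\leq\sup_{M}u$. The one place where I would insist on more detail is Calabi's trick: for the shifted support function built from $\delta+d(\cdot,o')$, the cancellation that keeps the comparison term bounded as the distance to the base point tends to zero is lost, so you need $d(p_{\varepsilon},o')$ bounded away from zero to retain a uniform bound on its Laplacian; this is saved by the observation that a cut point of $o$ lies at distance at least ${\rm inj}(o)>0$ from $o$, so taking $\delta<{\rm inj}(o)/2$ suffices. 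With that remark made explicit, your argument is a complete and faithful rendering of the classical proof that the paper omits.
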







Second, we study the Simons' type formula \eqref{lapla} in the set of non-umbilical points of $S$. 

\begin{prop}\label{Prop:Polinomio}
Let $\Sigma$ be a $H-$surface in $\hmf$. Then, away from the umbilic points of $S$, it holds
\begin{equation}\label{lapla3}
\frac{1}{2}\Delta|S|^{2} \geq  |\nabla S|^{2} + |S|^{2}F(|S|),
\end{equation}where  $F(x) = -x^{2} + bx + a$ is the second degree polynomial given by 
\begin{equation*}
\begin{split}
a & = 2(\k-4\t^{2})+2(H^{2} + \t^{2})  -2(\k-4\t^{2})|\mt|^{2}-\alpha^{2}\frac{\abs{\mt}^{4}}{2} ,\\
b & =  - 2 \abs{\alpha} |\mt|^{2} ,
\end{split}
\end{equation*}where $\alpha = \frac{\k-4\t^{2}}{2\sqrt{H^{2}+\t^{2}}}$.
\end{prop}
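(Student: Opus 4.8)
The plan is to start from the Simons' type formula \eqref{lapla} of Theorem \ref{Simfor}, namely $\frac{1}{2}\Delta|S|^{2} = |\nabla S|^{2} + 2K|S|^{2}$, and to replace the term $2K|S|^2$ by a lower bound of the form $|S|^2 F(|S|)$. The key input is the expression \eqref{eq2} for the Gaussian curvature already derived in the proof of Proposition \ref{In},
\begin{equation*}
K = (\kappa-4\t^{2})(1-\abs{\mt}^{2})+\t^{2}+H^{2}-\frac{1}{2}\abs{S}^{2}-\alpha \la S\mt_{\theta},\mt_{\theta} \ra - \alpha^{2}\frac{\abs{\mt}^{4}}{4}.
\end{equation*}
Substituting this into $2K|S|^2$ produces a polynomial in $|S|$ with coefficients depending on $|\mt|^2$, $\nu^2$ and the cross term $\la S\mt_\theta,\mt_\theta\ra$. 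The only term that is not already polynomial in $|S|$ is the mixed term $\la S\mt_\theta,\mt_\theta\ra$, so the heart of the argument is to bound it.

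First I would estimate the cross term. Since $S$ is traceless, away from the umbilic points its two eigenvalues are $\pm\lambda$ with $|S|^2=2\lambda^2$, so $\abs{S\mt_\theta}=\frac{1}{\sqrt 2}|S|\,\abs{\mt_\theta}$. Combined with the Cauchy--Schwarz inequality $\abs{\la S\mt_\theta,\mt_\theta\ra}\le \abs{\mt_\theta}\,\abs{S\mt_\theta}$ and $\abs{\mt_\theta}=\abs{\mt}$, this gives
\begin{equation*}
-\alpha\,\la S\mt_\theta,\mt_\theta\ra \ge -\abs{\alpha}\,\frac{\abs{S}\,\abs{\mt}^2}{\sqrt 2},
\end{equation*}
exactly as in \eqref{eq3} but keeping the factor $\abs{\mt}^2$ rather than bounding it by $1$. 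This term contributes the linear coefficient $b=-2\abs{\alpha}\abs{\mt}^2$ after multiplying by $2|S|^2$ and factoring out $|S|^2$. Next I would handle the curvature term $(\kappa-4\t^2)(1-\abs{\mt}^2)=(\kappa-4\t^2)\nu^2$ via the norm relation \eqref{Norm}; here one must be careful to retain these contributions exactly so that the constant term $a$ comes out as stated, collecting $2(\kappa-4\t^2)$ from the $1$, $-2(\kappa-4\t^2)\abs{\mt}^2$ from the $-\abs{\mt}^2$, $2(H^2+\t^2)$ from the $\t^2+H^2$, and $-\alpha^2\frac{\abs{\mt}^4}{2}$ from the last term.

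Assembling these pieces, multiplying \eqref{eq2} by $2|S|^2$ and factoring out $|S|^2$ yields $2K|S|^2 \ge |S|^2\big(-\abs{S}^2 + b\abs{S} + a\big)=|S|^2 F(|S|)$ with $F(x)=-x^2+bx+a$ and the stated coefficients. Plugging this into \eqref{lapla} immediately gives \eqref{lapla3}. The main obstacle, and the only place where genuine care is needed, is the cross-term estimate: it relies on the tracelessness of $S$ to get the sharp constant $\abs{S\mt_\theta}=\frac{1}{\sqrt 2}|S|\abs{\mt_\theta}$, which is precisely why one must restrict to the set of non-umbilic points of $S$ (where the eigenvalue description $\pm\lambda$ is valid and $|S|\neq 0$), and one should be attentive to keep $\abs{\mt}^2$ unbounded throughout so that $a$ and $b$ depend genuinely on $\abs{\mt}^2$ rather than on its crude bound by $1$.
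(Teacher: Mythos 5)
Your proposal is correct and follows essentially the same route as the paper: substitute the curvature expression \eqref{eq2} into the Simons formula \eqref{lapla} and control the cross term $\la S\mt_{\theta},\mt_{\theta} \ra$ by Cauchy--Schwarz, keeping the $\abs{\mt}^{2}$ dependence. One bookkeeping slip: with your sharper bound $\abs{\la S\mt_{\theta},\mt_{\theta} \ra}\leq \frac{1}{\sqrt{2}}\abs{S}\abs{\mt}^{2}$ the linear coefficient that actually comes out is $-\sqrt{2}\abs{\alpha}\abs{\mt}^{2}$, not $-2\abs{\alpha}\abs{\mt}^{2}$ as you assert; this is harmless, since $-\sqrt{2}\abs{\alpha}\abs{\mt}^{2}\abs{S}\geq -2\abs{\alpha}\abs{\mt}^{2}\abs{S}$ means your (stronger) inequality implies \eqref{lapla3}, but to land exactly on the stated $b$ the paper simply uses the cruder estimate $\abs{\la S\mt_{\theta},\mt_{\theta} \ra}\leq \abs{S}\abs{\mt}^{2}$.
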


\begin{proof}
From \eqref{eq2}, the Gaussian curvature can be written as
\begin{equation*}
K = (\kappa-4\t^{2})(1-\abs{\mt}^{2})+\t^{2}+H^{2}- \alpha^{2}\frac{\abs{\mt}^{4}}{4}-\frac{1}{2}\abs{S}^{2}-\alpha \la S\mt_{\theta},\mt_{\theta} \ra .
\end{equation*}

Since $\abs{\la S\mt_{\theta},\mt_{\theta} \ra} \leq \abs{S}\abs{\mt }^{2}$, substituting the above formulas into \eqref{lapla} yields
\begin{equation*}
\begin{split}
\frac{1}{2}\Delta|S|^{2} \geq |\nabla S|^{2} & + |S|^{2} \big(  2(\kappa-4\t^{2})(1-\abs{\mt}^{2})+2(\t^{2}+H^{2})- \alpha^{2}\frac{\abs{\mt}^{4}}{2}\\
    & \qquad \qquad -\abs{S}^{2}-2 \abs{\alpha }\abs{S}\abs{\mt}^{2} \big) ,
\end{split}
\end{equation*}as claimed.
\end{proof}

So, our next step is to study the first positive root $\bar x \in \r ^{+}$ of $F(x)$ so that $F(x)>0$ for all $x \in (0,\bar x)$. To do so, we set  $t  = \abs{\mt}^{2} \in [0,1]$
and hence, we can rewrite:
\begin{equation*}
\begin{split}
a(t) &= 2(\kappa-4\t^{2})  + 2(H^{2}+\t^{2}) -2(\kappa-4\t^{2})t -\frac{\alpha^{2}}{2}t^{2}.\\
b(t) &= -2 \abs{\alpha} t.
\end{split}
\end{equation*}

In order to obtain a positive real root, the coefficients of $F$ must hold $h(t)= b(t)^{2}+ 4a(t) > 0$ and $a(t)>0$ for all $t\in [0,1]$. This means that 
$$  (H^{2}+\t^{2}) + (\kappa-4\t^{2})(1-t)+ \frac{\alpha ^{2}}{4}t^{2} > 0 \text{ for all } t \in [0,1] ,$$and
$$  2(H^{2}+\t^{2}) + 2(\kappa-4\t^{2})(1-t)- \frac{\alpha ^{2}}{2}t^{2} > 0 \text{ for all } t \in [0,1].$$

\begin{prop}\label{Zero1}
Define $G: [0,1]  \rightarrow \mathbb{R}$ as
\begin{equation}\label{zero}
G(t) = \frac{b(t) + \sqrt{b(t)^{2}+4a(t)}}{2},
\end{equation}then, assuming $h(t)>0$ and $a(t)>0$ for all $t\in [0,1]$, the function $G(t)$ satisfies:
\begin{equation}\label{MinG}
{\rm min}_{t\in[0,1]} G(t) = {\rm min}\set{\sqrt{2}\sqrt{(H^{2}+\t^{2})+(\kappa -4\t^{2})} , -\abs{\alpha} +\sqrt{2(H^{2}+\t^{2}) +\frac{\alpha^{2}}{2}}}.
\end{equation}
\end{prop}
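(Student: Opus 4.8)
The plan is to reduce the computation of $\min_{[0,1]} G$ to evaluating $G$ only at the two endpoints $t=0$ and $t=1$. A direct substitution into \eqref{zero} gives, using $b(0)=0$, $a(0)=2(\kappa-4\t^{2})+2(H^{2}+\t^{2})$ and $b(1)=-2\abs{\alpha}$, $a(1)=2(H^{2}+\t^{2})-\tfrac{\alpha^{2}}{2}$,
$$G(0)=\sqrt{a(0)}=\sqrt{2}\sqrt{(H^{2}+\t^{2})+(\kappa-4\t^{2})},\qquad G(1)=-\abs{\alpha}+\sqrt{2(H^{2}+\t^{2})+\tfrac{\alpha^{2}}{2}},$$
so these are exactly the two quantities appearing in \eqref{MinG}. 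Hence the entire content of the proposition is the assertion that $G$ attains its minimum on $[0,1]$ at an endpoint, i.e. that $G(t)\ge \min\{G(0),G(1)\}$ for every $t\in[0,1]$.

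The key observation I would use is that, by \eqref{zero}, the value $G(t)$ is precisely the unique positive root of the polynomial $F(x)=-x^{2}+b(t)x+a(t)$ from Proposition \ref{Prop:Polinomio} (the assumptions $h(t)>0$ and $a(t)>0$ guarantee real roots of opposite sign, so $G(t)>0$ is well defined), and that this polynomial enjoys two monotonicity properties. First, for fixed $t$ the map $x\mapsto F(x)$ is strictly decreasing on $[0,+\infty)$, since $\partial_{x}F=-2x+b(t)=-2x-2\abs{\alpha}t<0$ for $x>0$; in particular $F(x)\le F(G(t))=0$ exactly when $x\ge G(t)$. Second, and this is the crucial point, for fixed $x$ the map $t\mapsto F(x)=-x^{2}+b(t)x+a(t)$ is \emph{concave} on $[0,1]$: indeed $b(t)=-2\abs{\alpha}t$ is affine in $t$, while $a(t)$ has leading term $-\tfrac{\alpha^{2}}{2}t^{2}$, so the whole expression is a downward parabola in $t$.

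With these two facts the conclusion is immediate. Set $m=\min\{G(0),G(1)\}>0$. Applying the first property at $t=0$ and $t=1$, the inequalities $m\le G(0)$ and $m\le G(1)$ yield $F(m)\big|_{t=0}\ge 0$ and $F(m)\big|_{t=1}\ge 0$. By concavity of $t\mapsto F(m)$ we then get, for every $t\in[0,1]$,
$$F(m)\big|_{t}\ \ge\ (1-t)\,F(m)\big|_{t=0}+t\,F(m)\big|_{t=1}\ \ge\ 0\ =\ F(G(t))\big|_{t}.$$
Since $x\mapsto F(x)$ is strictly decreasing on $[0,+\infty)$ and $m,G(t)\ge 0$, this forces $m\le G(t)$. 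As $t$ is arbitrary, $\min_{[0,1]}G=m=\min\{G(0),G(1)\}$, which is \eqref{MinG}.

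The main obstacle is conceptual rather than computational: one is tempted to try to show directly that $G$ is monotone or convex, but its derivative $G'=(b'G+a')/\sqrt{b^{2}+4a}$ has an indefinite sign in general, so that route is awkward. The trick is to avoid analyzing $G$ altogether and instead exploit the \emph{concavity in $t$} of the defining polynomial $F$, which transparently produces the endpoint bound. Once this is seen, the remaining work, namely the explicit simplification of $G(0)$ and $G(1)$ above, is purely routine algebra.
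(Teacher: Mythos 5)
Your proof is correct, and it takes a genuinely different route from the paper's. The paper proceeds by calculus on $G$ itself: it computes $G'(t)$, writes it as $\Psi(t)/\sqrt{b(t)^{2}+4a(t)}$ with $\Psi(t)=-2\abs{\alpha}G(t)-\alpha^{2}t-2(\kappa-4\tau^{2})$, and shows that at any interior critical point $\bar t$ one has $G''(\bar t)<0$, so every interior critical point is a strict local maximum and the minimum must occur at an endpoint; it then evaluates $G(0)$ and $G(1)$ exactly as you do. You instead avoid differentiating $G$ altogether, exploiting that $G(t)$ is the unique positive root of $F(x)=-x^{2}+b(t)x+a(t)$, that $x\mapsto F(x)$ is strictly decreasing on $[0,+\infty)$, and that $t\mapsto F(x)$ is concave (since $b$ is affine and $a$ is a downward parabola); the chord inequality applied to $m=\min\set{G(0),G(1)}$ then yields $F(m)\geq 0$ on all of $[0,1]$, hence $m\leq G(t)$ everywhere. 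Your argument is more elementary and arguably more robust: it sidesteps the second-derivative computation (where the paper in fact has a small slip, asserting $\Psi'(\bar t)=-\alpha^{2}\bar t$ rather than $-\alpha^{2}$, though this does not affect its conclusion) and works verbatim in the degenerate case $\alpha=0$, where concavity is only weak. What the paper's approach buys in exchange is slightly finer information about the shape of $G$, namely that all interior critical points are strict local maxima, not merely that the minimum is attained at an endpoint.
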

\begin{proof}
We compute the interior critical points of $G(t)$. To do so, we compute the derivative of $G$ at an interior point 
\begin{equation*}
\begin{split}
G'(t) &= \frac{b'(t)}{2}\left( 1+ \frac{b(t)}{\sqrt{b(t)^{2}+4a(t)}} \right)+ \frac{a'(t)}{\sqrt{(b(t))^{2}+4a(t)}}\\
    & =-\abs{\alpha}\left(\frac{b(t)+\sqrt{b(t)^{2}+4a(t)}}{\sqrt{b(t)^{2}+4a(t)}} \right) - \frac{\alpha ^{2}t + 2(\kappa -4 \t ^{2})}{\sqrt{(b(t))^{2}+4a(t)}} \\
    &= \frac{1}{\sqrt{b(t)^{2}+4a(t)}}\left(  -2\abs{\alpha} G(t)-\alpha ^{2}t- 2(\kappa -4 \t ^{2}) \right)
\end{split}
\end{equation*}

Assume that there exists $\bar t \in (0,1)$ so that $G'(\bar t) =0$, then the above equation implies that the function $ \Psi (t) := -2\abs{\alpha} G(t)-\alpha ^{2}t- 2(\kappa -4 \t ^{2}) $ satisfies $\Psi (\bar t) =0$ and $\Psi ' (\bar t) =-\alpha ^2 \bar t$. Moreover, observe that
$$ G'' (t) = R(t) \Psi (t) + \frac{\Psi ' (t)}{\sqrt{b(t)^{2}+4a(t)}} ,$$for some smooth function $R:(0,1)\to \r ^+$. Hence
$$ G'' (\bar t) =  \frac{\Psi (\bar t)}{\sqrt{b(\bar t)^{2}+4a(\bar t)}} =  - \frac{\alpha ^2 \bar t}{\sqrt{b(t)^{2}+4a(t)}} <0.$$

Therefore, $G$ does not have an interior minimum. Therefore, in any case, ${\rm min}_{t\in [0,1]} G(t) = {\rm min}\set{G(0) ,G(1)}$. 
\end{proof}

Next, we compute the minimum of $G$. To do so, we will distinguish two cases depending on the sign of $\kappa - 4 \t ^2$.

\begin{quote}
{\bf Case 1: $\k-4\t^{2} > 0$.} In this case, we assume $4(H^{2}+\t^{2})>\kappa -4 \t^{2}$. Then, 
$$  h(t)>0 \text{ and } a(t) >0 \text{ for all } t \in [0,1].$$

Therefore, Proposition \ref{Zero1} implies
$$ {\rm min}_{t\in[0,1]} G(t) = \sqrt{2}\sqrt{(H^{2}+\t^{2})+(\kappa -4\t^{2})} >0 .$$
\end{quote}

Now, we are ready to show a pinching theorem for complete $H-$ surfaces in $\hmf$ when $\k -4\t^{2}>0$.

\begin{theorem}\label{ap1}
Let $\Sigma$  be a complete immersed $H-$surface in $\hmf$, $\kappa - 4\tau^{2} >0$. Assume  that $4(H^{2}+\t^{2}) > \kappa - 4\tau^{2}$  and 
$$\sup_{\Sigma} |S| <  \sqrt{2}\sqrt{(H^{2}+\t^{2})+(\kappa -4\t^{2})},$$
where $S$ is the traceless Abresch-Rosenberg shape operator. Then, $\Sigma$ is an Abresch-Rosenberg surface in  $\hmf$.  

Moreover, if 
$$\sup_{\Sigma} |S| =  \sqrt{2}\sqrt{(H^{2}+\t^{2})+(\kappa -4\t^{2})}$$and there exists one point $p \in \Sigma$ such that  $\abs{S(p)}=\sup_{\Sigma} |S|$, then $\Sigma$ is a Hopf cylinder.
\end{theorem}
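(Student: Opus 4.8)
The plan is to exploit the Simons' type formula from Theorem \ref{Simfor} together with Proposition \ref{Prop:Polinomio} and the Omori--Yau Maximum Principle. First I would check that the Omori--Yau principle applies: under the pinching hypothesis $\sup_\Sigma |S| < \sqrt{2}\sqrt{(H^2+\t^2)+(\kappa-4\t^2)}$, Corollary \ref{BoundedC} guarantees that $|S|$ bounded forces $|A|$, and hence $|K|$, bounded, so the Gaussian curvature is bounded and therefore the Ricci curvature of $\Sigma$ is bounded from below. Thus the Omori--Yau principle is available for the bounded function $u = |S|^2$ (or, more conveniently, for $|S|$ itself after handling the umbilic set).

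Next I would set up the contradiction. Suppose $\Sigma$ is not an Abresch--Rosenberg surface, so $|S|$ does not vanish identically and $\sup_\Sigma |S| =: s_0 > 0$. Away from umbilic points of $S$, Proposition \ref{Prop:Polinomio} gives
\begin{equation*}
\frac{1}{2}\Delta |S|^2 \geq |\nabla S|^2 + |S|^2 F(|S|),
\end{equation*}
where $F(x) = -x^2 + b(t)x + a(t)$ with $t = |\mt|^2$. The key analytic input is Proposition \ref{Zero1} together with Case 1: under the hypotheses $\kappa - 4\t^2 > 0$ and $4(H^2+\t^2) > \kappa - 4\t^2$, the smallest positive root of $F$ over all admissible $t$ is exactly $G_{\min} = \sqrt{2}\sqrt{(H^2+\t^2)+(\kappa-4\t^2)}$, so $F(x) > 0$ for every $x$ strictly between $0$ and $G_{\min}$, uniformly in $t \in [0,1]$. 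Applying Omori--Yau to $u=|S|^2$ yields a sequence $\{p_j\}$ with $u(p_j)\to \sup u = s_0^2$ and $\Delta u(p_j) < 1/j$. Taking the limit in the Simons' inequality at these points and using $|\nabla S|^2 \geq 0$ forces $s_0^2 \, F(s_0) \leq 0$; since $0 < s_0 < G_{\min}$ makes $F(s_0) > 0$, this is a contradiction, so $s_0 = 0$ and $\Sigma$ is an Abresch--Rosenberg surface. (One must take a little care at umbilic points of $S$, where the maximum of $|S|$ could conceivably be attained; there $|S|=0$, which only helps, so the supremum being positive means the relevant points lie in the non-umbilic region.)

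For the equality/rigidity case, suppose $\sup_\Sigma |S| = G_{\min}$ and that this value is attained at some interior point $p$. Then $|S|^2$ attains an interior maximum at $p$, so $\Delta |S|^2(p) \leq 0$ and $\nabla |S|(p) = 0$. Feeding this into the Simons' inequality at $p$ gives
\begin{equation*}
0 \geq \frac{1}{2}\Delta|S|^2(p) \geq |\nabla S|^2(p) + |S|^2(p)\,F(|S|(p)) \geq 0,
\end{equation*}
forcing $F(G_{\min}) = 0$ and $|\nabla S|(p)=0$. The vanishing of $F$ at its root, read through the minimization in Proposition \ref{Zero1}, pins down $t = |\mt|^2$ to the specific value realizing $G_{\min}$ (here $t=0$, i.e.\ $\nu^2 = 1$). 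I would then argue that $|S|$ is in fact constant equal to $G_{\min}$ on all of $\Sigma$ by a connectedness/strong-maximum-principle argument applied to the elliptic inequality satisfied by $|S|$, which reduces the situation to $q^{AR}$ being a positive constant; Theorem \ref{Lem:qconstant} then identifies $\Sigma$ as a Hopf cylinder.

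The main obstacle I expect is the rigidity step: upgrading the pointwise equality at $p$ to $|S| \equiv G_{\min}$ globally. The clean route is to observe that $u=|S|$ satisfies $\Delta u \geq -u F(u)$ with $u$ achieving its maximum at an interior point while $F(u)>0$ just below the maximum value; a strong maximum principle (Hopf) argument then propagates the maximum to the whole connected surface, after which constancy of $|S|=G_{\min}>0$ means $q^{AR}$ is a positive constant and Theorem \ref{Lem:qconstant} closes the argument. The subtlety is that $F$ depends on the auxiliary function $t=|\mt|^2$, so one must verify that along the maximum set the inequality is genuinely strict unless $t$ is locked at its extremal value; handling this coupling between $|S|$ and $|\mt|$ carefully is where the real work lies.
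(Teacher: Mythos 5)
Your proposal follows essentially the same route as the paper's proof: for the strict inequality, Omori--Yau applied to $|S|^{2}$ together with Proposition \ref{Prop:Polinomio} and the uniform positivity of $F$ below $\sqrt{2}\sqrt{(H^{2}+\t^{2})+(\kappa-4\t^{2})}$ (Proposition \ref{Zero1}, Case 1), and for the equality case, the interior strong maximum principle forcing $|S|$ to be a positive constant so that Theorem \ref{Lem:qconstant} identifies $\Sigma$ as a Hopf cylinder. The only blemish is a sign slip in your final paragraph: the Simons' inequality yields $\Delta u \geq u\,F(u) \geq 0$ (genuine subharmonicity near the maximum point), not $\Delta u \geq -u\,F(u)$, which is evidently what you intended given your correct use of the inequality earlier.
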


\begin{proof} 
Assume $\abs{S}$ is not identically zero. Set $d := F({\rm sup}_{p\in \Sigma} \abs{S})$. From Proposition \ref{Zero1} we have 
$$ F(\abs{S}(p)) \geq d >0 . $$

Hence, Proposition \ref{Prop:Polinomio} implies that 
\begin{equation}\label{C1}
\Delta |S|^{2} \geq d \abs{S}^{2}
\end{equation}away from the umbilic points of $S$.

Thus, since $\abs{S}$ is bounded by hypothesis, Corollary \ref{BoundedC} implies that the Ricci curvature of $\Sigma$ (nothing but the Gaussian curvature) is bounded from below. Hence, the Omori-Yau Maximum Principle implies that there exists a sequence $\{ p_{j} \}_{j \in \mathbb{N}} \in \Sigma$ such that:
$$\lim _{j \rightarrow \infty} |S|^{2}(p_{j}) = \sup_{\Sigma}|S|^{2} \ \ and \ \ \ \ \ \lim_{j \rightarrow \infty} \Delta |S|^{2}(p_{j}) \leq 0.,$$which contradicts \eqref{C1}. Hence, $|S|= 0$ on $\Sigma$, that is, $\Sigma$ is an Abresch-Rosenberg surface.

Now, assume that $$\sup_{\Sigma} |S| =  \sqrt{2}\sqrt{(H^{2}+\t^{2})+(\kappa -4\t^{2})}$$and there exists one point $p \in \Sigma$ such that  $\abs{S(p)}=\sup_{\Sigma} |S| >0$. So, from Proposition \ref{Zero1}, there is a neighborhood $\Omega$ of $p$ so that
$$\Delta \abs{S} \geq 0.$$ 

Then, the Interior Maximum Principle implies that $\abs{S} $ is constant (non zero) on $\Omega$, and hence constant on $\Sigma$. Thus, Lemma \ref{Lem:qconstant} implies that $\Sigma$ is a Hopf cylinder.
\end{proof}

\begin{quote}
{\bf Case 2: $\k-4\t^{2} < 0$.}  In this case, we assume $H^{2}+\t^{2} > \abs{\k-4\t^{2}}$. Then, 
$$  h(t)>0 \text{ and } a(t) >0 \text{ for all } t \in [0,1].$$

Therefore, Proposition \ref{Zero1} implies
$$ {\rm min}_{t\in[0,1]} G(t) = -\abs{\alpha} +\sqrt{2(H^{2}+\t^{2}) +\frac{\alpha^{2}}{2}} >0$$
\end{quote}

So, arguing as in Theorem \ref{ap1}, we establish (without proof) a pinching theorem for complete $H$- surfaces in $\hmf$ when $\k - 4\t^{2}<0$. 

\begin{theorem}\label{ap2}
Let $\Sigma$  be a complete immersed $H-$surface in $\hmf$, $\k - 4\tau^{2} <0$. Assume that $H^{2}+\t^{2} > \abs{\k-4\t^{2}}$ and 
$$\sup_{\Sigma} |S| <  -\abs{\alpha} +\sqrt{2(H^{2}+\t^{2}) +\frac{\alpha^{2}}{2}},$$ where $S$ is the traceless Abresch-Rosenberg shape operator. Then, $\Sigma$ is  an Abresch-Rosenberg surface of $\hmf$.

Moreover, if 
$$\sup_{\Sigma} |S| =   -\abs{\alpha} +\sqrt{2(H^{2}+\t^{2}) +\frac{\alpha^{2}}{2}}$$and there exists one point $p \in \Sigma$ such that  $\abs{S(p)}=\sup_{\Sigma} |S|$, then $\Sigma$ is a Hopf cylinder.
\end{theorem}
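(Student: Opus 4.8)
The plan is to follow the blueprint already established in the proof of Theorem \ref{ap1}, since the statement of Theorem \ref{ap2} is its exact analogue in the regime $\kappa - 4\tau^2 < 0$. The crucial input, \textbf{Case 2} above, has already been recorded: under the hypothesis $H^2 + \tau^2 > \abs{\kappa - 4\tau^2}$ one verifies $h(t) > 0$ and $a(t) > 0$ on $[0,1]$, so that Proposition \ref{Zero1} applies and yields
$$ {\rm min}_{t \in [0,1]} G(t) = -\abs{\alpha} + \sqrt{2(H^2 + \tau^2) + \tfrac{\alpha^2}{2}} > 0. $$
This is the first positive root $\bar x$ of the polynomial $F$ from Proposition \ref{Prop:Polinomio}, and it is precisely the quantity appearing in the pinching bound. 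Thus the geometric heart of the argument is already in place; what remains is to replicate the maximum-principle machinery of Theorem \ref{ap1}.

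Concretely, I would first assume $\abs{S}$ is not identically zero and set $d := F(\sup_{\Sigma} \abs{S})$. Because the hypothesis $\sup_{\Sigma} \abs{S} < -\abs{\alpha} + \sqrt{2(H^2+\tau^2) + \alpha^2/2}$ places $\sup_\Sigma \abs{S}$ strictly inside the interval $(0, \bar x)$ where $F > 0$, we get $F(\abs{S}(p)) \geq d > 0$ for all $p$, and Proposition \ref{Prop:Polinomio} delivers $\Delta \abs{S}^2 \geq d\, \abs{S}^2$ away from the umbilic points. Next, since $\abs{S}$ is bounded, Corollary \ref{BoundedC} guarantees that $\abs{K}$ is bounded, hence the Ricci curvature of $\Sigma$ (equal to $K$ in dimension two) is bounded from below; this licenses the Omori-Yau Maximum Principle applied to $u = \abs{S}^2$. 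The resulting sequence $\{p_j\}$ satisfies $\Delta \abs{S}^2(p_j) \leq 0$ in the limit while $\abs{S}^2(p_j) \to \sup_\Sigma \abs{S}^2 > 0$, contradicting $\Delta \abs{S}^2 \geq d \abs{S}^2$. This forces $\abs{S} \equiv 0$, i.e. $\Sigma$ is an Abresch-Rosenberg surface.

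For the rigidity (equality) case, I would suppose $\sup_\Sigma \abs{S} = -\abs{\alpha} + \sqrt{2(H^2+\tau^2)+\alpha^2/2}$ and that this supremum is attained at some $p$ with $\abs{S(p)} > 0$. At $p$ we have $F(\abs{S}(p)) = F(\bar x) = 0$, and in a neighborhood $\Omega$ of $p$ Proposition \ref{Prop:Polinomio} gives $\Delta \abs{S} \geq 0$; the Interior Maximum Principle then forces $\abs{S}$ to be constant on $\Omega$, hence on all of $\Sigma$ by connectedness. Finally, Theorem \ref{Lem:qconstant} (constant positive $q^{AR}$) identifies $\Sigma$ as a Hopf cylinder.

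The one point that requires genuine attention, rather than a verbatim transcription of Theorem \ref{ap1}, is the sign bookkeeping in \textbf{Case 2}: because $\kappa - 4\tau^2 < 0$, the coefficient $a(t)$ now has a \emph{negative} quadratic term $-\tfrac{\alpha^2}{2}t^2$ together with a term $-2(\kappa-4\tau^2)t$ that is now \emph{positive}, so checking $a(t) > 0$ and $h(t) > 0$ throughout $[0,1]$ is exactly where the hypothesis $H^2+\tau^2 > \abs{\kappa-4\tau^2}$ is consumed and where the comparison $G(0)$ versus $G(1)$ selects the root $-\abs{\alpha} + \sqrt{2(H^2+\tau^2)+\alpha^2/2}$ rather than $\sqrt{2}\sqrt{(H^2+\tau^2)+(\kappa-4\tau^2)}$. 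Since the authors have already stated these verifications in the displayed \textbf{Case 2} block and explicitly invoke Proposition \ref{Zero1}, the maximum-principle steps go through word for word, which is why the theorem can reasonably be asserted ``without proof.''
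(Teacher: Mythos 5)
Your proposal is correct and follows exactly the route the paper intends: the paper proves Theorem \ref{ap2} by the displayed \textbf{Case 2} computation (which feeds Proposition \ref{Zero1} and selects the root $-\abs{\alpha}+\sqrt{2(H^{2}+\t^{2})+\alpha^{2}/2}$) and then declares that one argues verbatim as in Theorem \ref{ap1}, which is precisely the Omori--Yau contradiction via Proposition \ref{Prop:Polinomio} and Corollary \ref{BoundedC}, followed by the interior maximum principle plus Theorem \ref{Lem:qconstant} for the equality case. Your reconstruction matches this step for step, including the identification of where the hypothesis $H^{2}+\t^{2}>\abs{\k-4\t^{2}}$ is used.
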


\section*{Acknowledgments}

The first author, Jos\'{e} M. Espinar, is partially supported by Spanish MEC-FEDER Grant
MTM2013-43970-P; CNPq-Brazil Grants 405732/2013-9 and 14/2012 - Universal, Grant
302669/2011-6 - Produtividade; FAPERJ Grant 25/2014 - Jovem Cientista de Nosso Estado.

The second author, Haimer A. Trejos, is supported by CNPq-Brazil.

\end{document}